\newtheorem{definition}[]{Definition} 
\newtheorem{theorem}[definition]{Theorem}
\newtheorem{proposition}[definition]{Proposition}
\newtheorem{remark}[definition]{Remark}
\newtheorem{lemma}[definition]{Lemma}
\theoremstyle{definition}
\newtheorem{example}{Example}[definition]
\newcommand{\g}{{\mathfrak g}}
\def\CM{{\mathcal M}}
\def\CO{{\mathcal O}}
\def\CP{{\mathbb {CP}}}
\def\CPP{{\mathcal P}}
\def\phi{{\psi}}
\newcommand{\Id}{\operatorname{Id}}
\newcommand{\id}{\operatorname{\text{\sf id}}}
\newcommand{\C}{\mathbb{C}}
\newcommand{\Z}{\mathbb{Z}}
\newcommand{\R}{\mathbb{R}}
\newcommand{\N}{\mathbb{N}}
\newcommand{\SL}{{\rm SL}}
\newcommand{\GL}{{\rm GL}}
\newcommand{\PSL}{{\rm PSL}}
\title{Branes through finite group actions}
\author[S]{Sebastian Heller}
\author[L,m]{Laura P. Schaposnik}
\address[S]{Department of Mathematics, University of Hamburg,  20146 Hamburg, Germany}
\address[L]{Department of Mathematics, University of Illinois at Chicago, Chicago, 60647, USA }
\date{\today}
\begin{document}

\title{Branes through finite group actions}

%% use optional labels to link authors explicitly to addresses:
%% \author[label1,label2]{}
%% \address[label1]{}
%% \address[label2]{}

\maketitle
\begin{abstract}
%% Text of abstract
Mid-dimensional  $(A,B,A)$ and $(B,B,B)$-branes in the moduli space of flat $G_{\C}$-connections  appearing from finite group actions on compact Riemann surfaces are studied. The geometry and topology of these spaces  is then described via the corresponding Higgs bundles and Hitchin fibrations.
\end{abstract}

%\begin{keyword}
%% keywords here, in the form: keyword \sep keyword
%Finite group action, Higgs bundles, spectral data, Hitchin fibration, $(B,B,B)-$branes.
%% PACS codes here, in the form: \PACS code \sep code

%% MSC codes here, in the form: \MSC code \sep code
%% or \MSC[2008] code \sep code (2000 is the default)

%\end{keyword}

%\end{frontmatter}

%% \linenumbers

%% main text

\section{Introduction - mid-dimensional $(B,B,B)-branes$}\label{introduction}
 
This paper is dedicated to the study of mid-dimensional subspaces of the neutral connected component of the
 moduli space of flat $G_{\mathbb{C}}$-connections on a compact Riemann surface $\Sigma$ of genus $g\geq 2$, for $G_{\mathbb{C}}$ a complex Lie group, associated to finite group actions on $\Sigma$.  
As shown in \cite{N1,Simpson}, considering suitable stability conditions, a Higgs bundle defines a solution of equations for a $G$-connection $A$ known as the Hitchin equations, where $G$ is the maximal compact subgroup of $G_\C$. In particular, for $G = U(n)$ these are $F_A + [\Phi, \Phi^*] = 0$ and the vanishing of the antiholomorphic
part of the covariant derivative of $\Phi$, this is, $d''_A\Phi=0$.
In such case,   the connection $\nabla_A + \Phi + \Phi^*$ is flat, with holonomy in $\GL(n,\C)$.  
  We shall denote by $\mathcal{M}_{G_{\mathbb{C}}}$
  the moduli space  of $G_{\mathbb{C}}$-Higgs bundles on a compact Riemann surface $\Sigma$, the space of solutions to the Hitchin equations on the  surface   modulo gauge equivalence.

%This paper is dedicated to study mid-dimensional subspaces of the neutral connected component of the moduli space of flat $G_{\mathbb{C}}$-connections on a compact Riemann surface $\Sigma$ of genus $g\geq 2$, for $G_{\mathbb{C}}$ a complex Lie group,  associated to finite group actions on $\Sigma$.  
%As shown in \cite{N1,Simpson}, considering suitable stability conditions, a Higgs bundle defines a solution of equations for a $G$-connection $A$ known as the Hitchin equations, where $G$ is the maximal compact subgroup of $G_\C$. In particular, for $G = U(n)$ these are $F_A + [\Phi, \Phi^*] = 0$ and then the connection $\nabla_A + \Phi + \Phi^*$ is flat, with holonomy in $\GL(n,\C)$.  
% The moduli space $\CM_{G_\C}$ is thus    the space of solutions to the Hitchin equations on the  surface. 

The space of solutions to Hitchin's equations is 
   a hyper-K\"ahler manifold, and thus there is a family of complex structures from which we shall fix  $I,J,K$ obeying    quaternionic equations; along the paper we shall fix those structures following the notation of \cite{slices,N1,Kap}.  With this convention, the smooth locus of $\mathcal{M}_{G_{\mathbb{C}}}$ corresponds to the space of solutions to Hitchin's equations together with  the complex structure $I$. Throughout this work we shall adopt the physicists' language in which a Lagrangian submanifold supporting a flat connection is called an {\em A-brane}, and a complex submanifold supporting a complex sheaf is a {\em B-brane}. By considering the support of branes, one may say that a submanifold of a hyper-K\"ahler manifold is of   of type $A$ or $B$ with respect to each of the   structures, and hence one may speak of branes of type $(B,B,B), (B,A,A), (A,B,A)$ and $(A,A,B)$. Since understanding the support of branes is already a difficult endeavor, throughout the paper we shall consider only the support of branes and study their appearance within the moduli space of Higgs bundles. With a mild abuse of notation, we shall refer to the support of branes as branes themselves.

   The construction and study of branes in the moduli space of $G_\C$-Higgs bundles through actions on the group $G_\C$ or on the surface $\Sigma$ only began a few years ago - see, for example, \cite{aba} for a first appearance of branes through actions on the surface, and  \cite{slices} for actions on groups. Whilst one may describe those branes inside the Hitchin fibration of $\CM_{G_\C}$ (obtaining for instance a nonabelian fibration of $(B,A,A)$ as described in \cite{nonabelian}, or a real integrable system through $(A,B,A)$-branes as in \cite{aba})  not much more is known about the geometry of those branes constructed, or how other interesting subspaces appear through group actions.  

The construction of hyperholomorphic branes, or $(B,B,B)$-branes, is of particular interest, as not many nontrivial examples have been found - one may construct $(B,B,B)$-branes by considering the moduli space of $H_\C$-Higgs bundles inside the moduli space of $G_\C$-Higgs bundles, for $H_\C$ a complex subgroup of $G_\C$, but how else may one define $(B,B,B)$-branes? 
   After overviewing finite group actions on flat connections, in Section \ref{connections1}  new $(B,B,B)$-branes are constructed:
\smallbreak

 \noindent {\bf Theorem \ref{hyperkaehlersub}}: {\it 
Let  $\Sigma$ be  a compact Riemann surface of genus $g\geq2$ and $\Gamma$  a finite group acting on $\Sigma$ by holomorphic automorphisms. The connected components of the space of gauge equivalence classes of irreducible $\Gamma$-equivariant flat $G_\C$-connections are hyper-K\"ahler submanifolds
of the moduli space of flat irreducible $G_\C$-connections on $\Sigma$, and hence give $(B,B,B)$-branes in the moduli space  of $G_{\C}$-Higgs bundles}.

\smallbreak
Since mid-dimensional spaces may be $A$-branes or $B$-branes with respect to each of the   structures, 
 it is particularly interesting to seek    finite group actions giving mid-dimensional hyper-K\"ahler submanifolds. In  Section \ref{CMC_2} we give a classification of   actions leading to mid-dimensional branes in  the moduli space $\mathcal{M}_{dR}^g$ of flat $\SL(2,\C)$-connections on a compact Riemann surface of genus $g$
  \smallbreak
  
  \noindent {\bf Theorem \ref{only1}}: {\it  Let $\Sigma$ be a compact Riemann surface of genus $g\geq2$ and $\Gamma$ be a finite group   acting on  $\Sigma$ by holomorphic automorphisms
 such that  a component of the moduli space of $\Gamma$-equivariant flat $\SL(2,\C)$-connections on $\Sigma$ has half the dimension
of the moduli space $\mathcal M^g_{dR}$.
Then one  of the following holds:}
\begin{itemize}
\item[(I)]  {\it $\Gamma=\Z_2$ acts by a fixed points free involution on $\Sigma$, or}
\item[(II)]  {\it $\Sigma$ is hyperelliptic of genus $3$ and $\Gamma=\Z_2\times \Z_2.$}
\end{itemize}
{\it In   case (II), one of the $\Z_2$-factors corresponds to the hyperelliptic involution, whilst the other $\Z_2$-factor corresponds to an involution with 4 fixed points. }

\smallbreak

  Although we highlight results which hold for generic groups, most of the work in the remaining sections is done for $G_{\mathbb{C}}=\SL(2,\C)$. In order to understand the geometry of these branes, we consider them inside the moduli space of Higgs bundles and look at their intersection with smooth fibres of the Hitchin fibration. 
  After overviewing the  $\SL(2,\C)$-Hitchin fibration in Section \ref{2cov}, 
  in Section \ref{fixer}  we obtain the following geometric description of the intersection of the $(B,B,B)$-brane of Theorem \ref{only1} (I) with the regular fibres of the Hitchin  fibration:
  \smallbreak
  
  \noindent {\bf Theorem \ref{Aabelian-variety}}:  {\it 
Let $\tau$ be a fixed point free involution. Then, the $\tau$-equivariant $(B,B,B)$-brane intersects a generic fibre  of the  Hitchin fibration over a point defining the spectral curve $S$
 in the abelian variety  $ {\rm Prym}(S/\tau,\Sigma/\tau)/\Z_2.$
%where the generator of $\Z_2$ is the holomorphic line bundle $P_2$ on $S_\tau$ corresponding to the (unbranched) covering $S\to S_\tau$
% .
 } \smallbreak
   
 In order to study  the equivariant $(B,B,B)$-branes of Theorem \ref{theorem_3} (II),  it is shown  in Section \ref{genus_3} that it suffices to consider Higgs bundles over hyperelliptic surfaces of genus 3  with fixed point free actions. In such case, we  describe the intersection of the brane with the regular fibres of the Hitchin fibration   in Theorem \ref{ultimo}.
      Finally, from \cite[Section 12]{Kap}, the equivariant and anti-equivariant spaces considered in this short paper  have dual branes in the moduli spaces of Higgs bundles for the Langlands dual group.  

      We conclude the work in Section \ref{duality} with comments on Langlands duality for the branes constructed in this paper, and noting   it is interesting to compare the spaces in Theorem \ref{hyperkaehlersub} with spaces appearing in other papers - e.g. the real integrable systems  given by $(A,B,A)$-branes  in \cite[Theorem 17]{aba} and  the $(B,A,A)$-branes of \cite{slices}.%[Section 3] %(which have recently been considered also in  \cite[Section 9]{ }). 

Since the work of the present paper was first announced at the Simons Center of Geometry and Physics in June 2016 (at the conference ``New perspectives
on Higgs bundles, branes and quantization''), actions of finite groups on the moduli space of Higgs bundles and of flat connections have received increasing attention - an  interested reader in the subject might want to look among other papers at the work of Schaffhauser \cite{some2} for actions on moduli spaces of vector bundles, and of Hoskins-Shaffhauser \cite{Some3} for interesting branes arising from group actions on quiver varieties.

\subsection*{Acknowledgments}
 This paper began during the workshop {\it Higgs Bundles in Geometry and Physics}  at the University of Heidelberg February 28-March 3 2016, and the authors would like to thank the organizers for such a stimulating environment.   The work of LPS is partially supported by NSF DMS-1509693 and the work of SH is partially supported by DFG HE 6828/1-2.
 
\section{Equivariant flat connections} \label{section_finite}\label{equivariant_conncetions}

%%%%%%%%%%%%%%%%%%%%%%%%%%%%%%%%%%%%%%%%%%%%%%%%
%%%%%%%%%%%%%%%%%%%%%%%%%%%%%%%%%%%%%%%%%%%%%%%%

Consider $\Sigma$ a compact (connected) Riemann surface of genus $g\geq2$ and a finite group action $\Gamma\times\Sigma\to\Sigma.$ These actions have been studied by many researchers and in the case of surfaces of genus 2 and 3, a complete classification of all finite group actions is given in \cite[Tables 4, 5]{B91}. % which shall prove to be very useful in the following sections. 
Moreover, in the case of actions induced on rank 2 bundles through automorphisms of $\Sigma$, a very concrete description of the fixed points in terms of parabolic structures is given in \cite{andersen}.

 In order to understand flat equivariant $G_\C$-connections on $\Sigma,$ one needs to first fix a $C^\infty$ trivialization $\underline\C^n=\Sigma\times\C^n\to\Sigma$ of the underlying vector bundle.  In what follows we shall restrict our attention to the groups $G_\C=\GL(n,\C),\, \SL(n,\C),$ and thus in the case of $\SL(n,\C)$ require the trivialization  to preserve the determinant.

 \begin{definition}\label{hola1}
 A $\Gamma$-equivariant flat connection on $\Sigma$ is a flat connection $\nabla$ on $\underline\C^n\to\Sigma$
 such that for every $\phi\in\Gamma$ there exist a $\GL(n,\C)$-gauge transformation $g_\phi\colon\Sigma\to \GL(n,\C)$ for which 
 \[\phi^*\nabla=\nabla \cdot g_\phi,\]
 and where
 $\phi\mapsto g_\phi$
is a {\em generalized group homomorphism}, i.e., satisfies $g_{\id}=\id$ and $g_{(\phi\circ\tau)}(p)=g_{\tau(p)}\circ g_{\phi}(\tau(p))$.
 \end{definition}
\begin{remark}
One should note that  Definition \ref{hola1} differs from the definition of an equivariant flat connection in \cite{BF96}, but is rather in the spirit of the definition of equivariant bundles with equivariant determinants of \cite[Definition 2.2]{andersen}. We have decided
not to fix the gauge transformations $g_\psi$ beforehand by giving a lift of the action of $G$ to the bundle. In this sense, the definition is
closer to the usual meaning of equivariance. On the other hand, Definition \ref{hola1} would seem more natural when shifting the focus to the moduli spaces
of flat connections.
\end{remark}
Denote by $\Gamma(\Sigma,G_\C)$ the space of  $G_\C$-gauge transformations on the Riemann surface $\Sigma$.  
  Note that the stabilizer subgroup  of the gauge action for an irreducible flat connection $\nabla$
 is contained in the diagonal constants,
 \begin{eqnarray}Stab_\nabla\subseteq\{g=\lambda \id \mid \lambda\in\C^*\},\end{eqnarray}
 with equality in the case of $G_\C=\GL(n,\C).$
 In the case of $G_\C=\SL(n,\C)$ one may not always be able to choose 
 the generalized group homomorphism to be of the form
\begin{eqnarray}\phi\in\Gamma \mapsto g_\phi\in \Gamma(\Sigma,\SL(n,\C)).\label{more}\end{eqnarray}
When \eqref{more} holds, the connections are  $\Gamma$-equivariant flat  $\SL(n,\C)$-connections on $\Sigma$. On the other hand, there are cases in which 
the generalized group homomorphism cannot be chosen to take values in the $\SL(n,\C)$-gauge group: an example of this is the so-called hyperelliptic descent of flat $\SL(2,\C)$-connections on a genus 2 surface considered in  \cite[$\S$2]{HL14}. As can be seen in such example, the dimensions of the two corresponding components, determined by   whether the gauges are $\SL(2,\C)$-valued or not, might be different for the same group action.% $\Gamma\times\Sigma\to\Sigma.$

 \subsection{Finite group actions on Riemann surfaces}\label{finite_1} 
Along the paper we shall distinguish between finite group actions on Riemann surfaces with or without fixed points. 
 In either case, $\Gamma$-equivariant flat $G_\C$-connection can be studied through the quotient surface. Whilst these results might be classical,  some proofs have been included here since   sources of reference for them could not be found. We restrict to the case of $G_\C=\SL(n,\C),\GL(n,\C)$:
 
 \begin{proposition}\label{no-fix-point-conncetion}
 Let $\Gamma\times\Sigma\to\Sigma$ be a finite group action by holomorphic automorphisms without fixed points.
 Then, any $\Gamma$-equivariant flat $G_\C$-connection is gauge-equivalent to the pull-back of a flat $\GL(n,\C)$-connection on $\Sigma/\Gamma.$  \end{proposition}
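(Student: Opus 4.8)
The plan is to exhibit $\pi\colon\Sigma\to X:=\Sigma/\Gamma$ as an unramified regular covering and to produce the flat connection on $X$ by equivariant descent of the trivial bundle $\underline\C^n=\Sigma\times\C^n$. Because the $\Gamma$-action is fixed-point free and $\Gamma$ is finite, $\pi$ is a finite covering of compact Riemann surfaces and in particular a local biholomorphism; the ramification phenomena that would otherwise obstruct descent are absent, and this is precisely what the hypothesis buys us.

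First I would use the generalized group homomorphism $\phi\mapsto g_\phi$ of Definition \ref{hola1} to lift the $\Gamma$-action on $\Sigma$ to the total space by setting $\hat\phi(p,v)=(\phi(p),\,g_\phi(p)^{-1}v)$. The normalization $g_{\id}=\id$ and the cocycle identity $g_{\phi\circ\tau}(p)=g_\tau(p)\,g_\phi(\tau(p))$ are exactly what is needed for $\widehat{\phi\circ\tau}=\hat\phi\circ\hat\tau$, so $\hat\phi$ defines a genuine (left) $\Gamma$-action covering the one on $\Sigma$, and it is fibrewise linear since each $g_\phi(p)\in\GL(n,\C)$. As the action downstairs has no fixed points, $\hat\phi(p,v)=(p,v)$ forces $\phi=\id$, so the lifted action is free; being an action of a finite group it is properly discontinuous. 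Hence the quotient $E:=(\Sigma\times\C^n)/\Gamma$ is a rank-$n$ complex vector bundle on $X$, and there is a canonical isomorphism $\pi^*E\cong\underline\C^n$ sending $(p,[(p,v)])\mapsto(p,v)$.

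Next I would check that the equivariance relation $\phi^*\nabla=\nabla\cdot g_\phi$ is precisely the statement that $\nabla$ is $\hat\phi$-invariant: writing $\nabla=d+A$, the lift transforms the connection by $\hat\phi^*\nabla=(\phi^*\nabla)\cdot g_\phi^{-1}$, which equals $\nabla$ precisely when $\phi^*\nabla=\nabla\cdot g_\phi$. A $\Gamma$-invariant connection on a free quotient descends, so there is a unique connection $\bar\nabla$ on $E\to X$ with $\pi^*\bar\nabla=\nabla$ under the canonical trivialization. Flatness is a local condition and $\pi$ is a local biholomorphism, so $\bar\nabla$ is flat. Note that $\bar\nabla$ is only a flat $\GL(n,\C)$-connection even when $\nabla$ is an $\SL(n,\C)$-connection, since the $g_\phi$ are merely $\GL(n,\C)$-valued; this matches the statement and the earlier discussion around \eqref{more}. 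By construction $\pi^*\bar\nabla$ is gauge-equivalent --- indeed equal under the canonical identification --- to $\nabla$, which is the claim.

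The step I expect to require the most care is the first: verifying that $\hat\phi$ is a bona fide group action with the correct composition order (so that the cocycle identity of Definition \ref{hola1} matches $\widehat{\phi\circ\tau}=\hat\phi\circ\hat\tau$, which is what pins down the inverse $g_\phi^{-1}$ in the fibre) and that freeness downstairs propagates upstairs so the quotient is a smooth bundle. Everything afterwards is the formal descent of an invariant flat connection along a covering; the essential role of the no-fixed-point hypothesis is exactly to keep the lifted action free, which is why the fixed-point case must be treated separately.
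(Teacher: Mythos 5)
Your proof is correct and takes essentially the same approach as the paper: both descend the trivialized bundle $\Sigma\times\C^n$ along the unbranched covering $\Sigma\to\Sigma/\Gamma$ by quotienting by the $\Gamma$-action twisted by the generalized group homomorphism, and then pull the induced flat connection back to recover $\nabla$ up to gauge. Your extra care with the fibrewise $g_\phi^{-1}$ (so that the cocycle identity yields a genuine left action) and with freeness of the lifted action simply makes explicit what the paper's equivalence-relation formulation leaves implicit.
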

 
\begin{proof}
 As the group $\Gamma$ acts without fixed points, the quotient map $\Sigma\to \Sigma/\Gamma$ gives an (unbranched) covering map.
We define an equivalence relation on $\Sigma\times\C^n$ by
$(p,v)\equiv (q,w)$
if and only if there exists $\phi\in\Gamma$ such that
$\phi(p)=q$ and $w=g_\phi(p)(v).$
Then the quotient
\[(\Sigma\times\C^n)/\Gamma\to \Sigma/\Gamma\] 
is a vector bundle equipped with a natural induced flat connection $\tilde\nabla$
which pulls back to connection which is gauge equivalent to $\nabla$ on $\underline{\C^n}\to\Sigma.$
 \end{proof} 
 For $\Gamma\times\Sigma\to\Sigma$  a finite group action  by holomorphic automorphisms with fixed points,  denote by $B\subset \Sigma/\Gamma$  the image of the fixed points, giving the  branch points of the ramified cover
 \begin{eqnarray}\pi_{\Gamma} : \Sigma \rightarrow \Sigma/\Gamma.\end{eqnarray}
 From here, the following proposition follows naturally. 
 
\begin{proposition}\label{fix-point-conncetionI}
 Let $\Gamma\times\Sigma\to\Sigma$ be a group action by holomorphic automorphisms with fixed points, and let $B$ be the image of the fixed points in $\Sigma/\Gamma$. Then, any $\Gamma$-equivariant flat $G_\C$-connection    is gauge-equivalent to the extension of the pullback of a flat $\GL(n,\C)$-connection on $\Sigma/\Gamma-B.$
\end{proposition}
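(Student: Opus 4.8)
The plan is to reduce to the fixed-point-free setting of Proposition~\ref{no-fix-point-conncetion} by excising the ramification locus, and then to argue that the connection obtained on the punctured quotient extends back across the branch points precisely because the connection we started from is globally defined and flat on all of $\Sigma$. In this sense the statement ``follows naturally'' from the unbranched case, the only new ingredient being the behaviour at the fixed points.

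First I would set $F:=\pi_{\Gamma}^{-1}(B)\subset\Sigma$, the finite set of points with nontrivial stabilizer, and write $\Sigma^\circ:=\Sigma\setminus F$. Every point of $\Sigma^\circ$ has trivial stabilizer, so $\Gamma$ acts freely on $\Sigma^\circ$ and $\pi_\Gamma$ restricts to an unbranched covering $\Sigma^\circ\to\Sigma/\Gamma-B$. The restriction $\nabla|_{\Sigma^\circ}$ is again a $\Gamma$-equivariant flat connection, with the same generalized group homomorphism $\phi\mapsto g_\phi$ read on $\Sigma^\circ$. Proposition~\ref{no-fix-point-conncetion} then produces a flat $\GL(n,\C)$-connection $\tilde\nabla$ on $\Sigma/\Gamma-B$ whose pullback $\pi_\Gamma^*\tilde\nabla$ is gauge-equivalent to $\nabla|_{\Sigma^\circ}$, say via a gauge transformation $h\in\Gamma(\Sigma^\circ,\GL(n,\C))$.

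It remains to extend $\pi_\Gamma^*\tilde\nabla$ across $F$ and to recover $\nabla$ up to gauge, and this is where the only genuine content lies. Fix $p\in F$ with stabilizer of order $m_p$; in a local coordinate the quotient is modeled on $z\mapsto z^{m_p}$, so a single loop around $p$ projects to $m_p$ loops around the branch point $b:=\pi_\Gamma(p)$. Writing $M$ for the monodromy of $\tilde\nabla$ around $b$, the monodromy of $\pi_\Gamma^*\tilde\nabla$ around $p$ is therefore $M^{m_p}$. On the other hand, since $\nabla$ is a smooth flat connection on the whole of $\Sigma$ its monodromy around the contractible loop encircling $p$ is trivial, and conjugating by $h$ gives $M^{m_p}=\id$. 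Thus $M$ has finite order (so $\tilde\nabla$ genuinely acquires monodromy at the branch points), while $\pi_\Gamma^*\tilde\nabla$ has trivial monodromy around every $p\in F$ and hence extends to a flat connection on $\underline{\C^n}\to\Sigma$, which is the object named in the statement.

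Finally I would check that $\nabla$ is gauge-equivalent to this extension, not merely equal to it off $F$. The gauge $h$ is a parallel section of the flat connection induced on $\Hom$ between the two bundles over $\Sigma^\circ$; near each $p$ this induced connection extends smoothly and has trivial local monodromy, so $h$ together with $h^{-1}$ is single-valued under parallel transport and extends smoothly across $p$, remaining invertible there by continuity. Hence $h$ defines a global gauge transformation on $\Sigma$ intertwining $\nabla$ with the flat extension of $\pi_\Gamma^*\tilde\nabla$. I expect the extension across $B$ to be the main obstacle: one must confirm that the smoothness and flatness of $\nabla$ on all of $\Sigma$ force both the local monodromy of $\tilde\nabla$ at the branch points and the gauge $h$ to behave well, so that passing to the quotient and back introduces no singular behavior along $B$.
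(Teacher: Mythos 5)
Your reduction is the right one, and it is the one the paper itself intends (the paper offers no separate argument, saying the proposition ``follows naturally'' from Proposition~\ref{no-fix-point-conncetion}): excise $F=\pi_\Gamma^{-1}(B)$, note that $\Gamma$ acts freely on $\Sigma^\circ=\Sigma\setminus F$, apply the quotient construction there (its proof never uses compactness, so it applies to the open surface), and then control what happens at the punctures. Your monodromy computation is also correct: the loop around $p$ covers the loop around $b$ with multiplicity $m_p$, and smoothness of $\nabla$ at $p$ forces $M^{m_p}=\id$; this is essentially the content of Proposition~\ref{fix-point-conncetion}.

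The gap is in the extension step, and as stated it cannot be repaired. Trivial monodromy of $\pi_\Gamma^*\tilde\nabla$ around $p$ does \emph{not} imply that its connection form extends across $p$; it only implies gauge-equivalence, on a punctured neighbourhood, to a connection that extends, and the gauge achieving this is in general singular at $p$. In the situation this paper cares about (cf.\ Remark~\ref{Pg3}) the descended connection has local monodromy conjugate to $\diag(1,-1)$, so near $b$ it has the form $d+A\,\tfrac{dw}{w}$ with $A$ conjugate to $\diag(0,\tfrac{1}{2})$; pulling back under $w=z^{2}$ gives $d+\diag(0,1)\,\tfrac{dz}{z}$, which has trivial monodromy but is genuinely singular at $z=0$, the desingularizing gauge being $\diag(1,z)$. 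Correspondingly, your claim that $h$ extends smoothly and invertibly across $p$ is false here, and your argument for it is circular: you assert that the induced $\Hom$-connection ``extends smoothly near $p$'', which presupposes that $\pi_\Gamma^*\tilde\nabla$ extends there --- the very point in question. One can also see the failure directly: combining $\phi^*\nabla=\nabla\cdot g_\phi$ with $\pi_\Gamma\circ\phi=\pi_\Gamma$ shows that $h\circ\phi$ and $h\,g_\phi$ differ by an automorphism of $\pi_\Gamma^*\tilde\nabla$ (a scalar $\lambda$ in the irreducible case), so an invertible limit $h(p)$ would force $g_\phi(p)$ to be scalar, contradicting $g_\phi(p)\sim\diag(1,-1)$. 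The correct reading is the one the paper adopts later, in the proof of Theorem~\ref{hyperkaehlersub}: the ``extension of the pullback'' means the pullback followed by a \emph{singular} desingularizing gauge transformation (your $h^{-1}$ is exactly such a desingularization), after which the extension is gauge-equivalent to $\nabla$ essentially by construction. Equivalently, and most cleanly, work with bundles rather than connection forms: the quotient flat bundle produced by Proposition~\ref{no-fix-point-conncetion} pulls back canonically to $(\underline{\C^n},\nabla)|_{\Sigma^\circ}$, so its extension across $F$ is $\nabla$ itself, and no smoothness of any connection $1$-form at $F$ needs to be --- or can be --- established.
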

\begin{remark}\label{P_on_quotient}
If
 $G_\C=\SL(n,\C)$ and the generalized group homomorphism $\phi\mapsto  g_\phi$ takes values in the $\SL(n,\C)$-gauge group, then in Propositions \ref{no-fix-point-conncetion}-\ref{fix-point-conncetionI} the connection on $\Sigma/\Gamma$, resp. on $\Sigma/\Gamma-B$, can be chosen to be a $\SL(n,\C)$-connection.
 If the generalized group homomorphism $\phi\mapsto  g_\phi$ does not take values in the $\SL(n,\C)$-gauge group, then it is more
appropriate (e.g., for counting dimensions of the   moduli spaces) to consider the induced $\mathbb P\SL(n,\C)$-connection instead of the $\GL(n,\C)$-connection constructed in Prop. \ref{no-fix-point-conncetion}-\ref{fix-point-conncetionI}. 
\end{remark}

The local monodromy of the equivariant connections around images $p\in B$ of branch points can be described by considering the stabilizer group of $p$, leading to the following: 

\begin{proposition}\label{fix-point-conncetion}
Given a $\Gamma$-equivariant $G_{\C}$-connection on $\Sigma$,  the conjugacy class of the monodromy along a small loop around a point $p\in \pi_\Gamma^{-1}(B)$ is given by a root of the identity. Moreover, for sufficiently close (with respect to the supremum norm)
 irreducible flat $\Gamma$-equivariant connections, the conjugacy classes of these local monodromies are the same.
 \end{proposition}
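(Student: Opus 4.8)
The plan is to reduce the statement to the local structure of the action near a fixed point together with the cocycle condition of Definition \ref{hola1}. Fix $p\in\pi_\Gamma^{-1}(B)$ and let $\Gamma_p\subseteq\Gamma$ denote its stabilizer. A nontrivial holomorphic automorphism fixing $p$ acts faithfully on the tangent line $T_p\Sigma$, since an automorphism of finite order with trivial derivative at a fixed point is the identity; hence $\Gamma_p$ embeds into $\GL(T_p\Sigma)\cong\C^*$ and is cyclic, say of order $m$ and generated by $\gamma$. By Bochner's linearization theorem there is a holomorphic coordinate $z$ centered at $p$ in which $\gamma$ acts as $z\mapsto\zeta z$, $\zeta=e^{2\pi i/m}$, so that $w=z^{m}$ is a coordinate on the quotient disc centered at $\bar p=\pi_\Gamma(p)\in B$ and the quotient map is the $m$-fold branched cover $z\mapsto z^{m}$.

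First I would identify the local monodromy around $\bar p$ with the value $g_\gamma(p)$. By Proposition \ref{fix-point-conncetionI} the connection $\nabla$ descends to a flat connection $\tilde\nabla$ on $\Sigma/\Gamma-B$, and the monodromy in question is that of $\tilde\nabla$ around $\bar p$. On the contractible disc $D$ around $p$ the flat connection $\nabla$ admits a flat frame in which parallel transport is trivial, while the descent of Proposition \ref{fix-point-conncetionI} glues the fibres by the equivariant identification $\gamma\cdot(z,v)=(\gamma z,\,g_\gamma(z)v)$. Transporting this flat frame once around $\bar p$ therefore returns it to itself twisted only by $g_\gamma$ evaluated along the lift, and shrinking the loop to $\bar p$ shows that the local monodromy is conjugate to $g_\gamma(p)$; a direct computation in the rank-one model $\nabla=d$, where $g_\gamma$ is forced to be a constant $m$-th root of unity, confirms this.

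To see that this monodromy is a root of the identity I would evaluate the cocycle relation of Definition \ref{hola1} at $p$. Since $\gamma^{j}(p)=p$ for all $j$, iterating $g_{\gamma\circ\gamma}(p)=g_\gamma(p)\,g_\gamma(\gamma(p))$ yields $g_{\gamma^{m}}(p)=g_\gamma(p)^{m}$; as $\gamma^{m}=\id$ and $g_{\id}=\id$, this gives $g_\gamma(p)^{m}=\id$. Hence the local monodromy $M$ satisfies $M^{m}=\id$, proving the first assertion, and $M$ is diagonalizable with all eigenvalues in the finite set $\mu_m$ of $m$-th roots of unity. For the rigidity statement I would argue by continuity and discreteness: along a connected family of irreducible flat $\Gamma$-equivariant connections close in the supremum norm, the equation $\gamma^{*}\nabla=\nabla\cdot g_\gamma$ determines $g_\gamma$ up to the stabilizer of $\nabla$, which for an irreducible connection lies in the scalars, so $g_\gamma$ is pinned down up to a homomorphism $\Gamma_p\to\C^{*}$, i.e. up to an element of $\mu_m$. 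Choosing this scalar continuously makes $g_\gamma(p)$, and hence $M$, vary continuously; its eigenvalue multiset lies in the discrete set $\mu_m$ and so cannot change, and since $M$ is diagonalizable its conjugacy class is determined by this multiset and is locally constant.

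The step I expect to be most delicate is the clean identification of the local monodromy of $\tilde\nabla$ with $g_\gamma(p)$, since it requires unwinding the descent of Proposition \ref{fix-point-conncetionI} in the linearized model and verifying that the flat parallel transport contributes trivially, so that only the equivariant gluing survives in the limit as the loop contracts to $\bar p$. Once this identification and the choice of a continuous representative $g_\gamma$ are secured, the cocycle computation giving $g_\gamma(p)^{m}=\id$ and the continuity-plus-discreteness argument for rigidity are routine.
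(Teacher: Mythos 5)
Your skeleton matches the paper's own proof: cyclic stabilizer $\Gamma_p$ of order $m$, identification of the local monodromy $M$ with $g_\gamma(p)$ up to conjugacy, the cocycle of Definition \ref{hola1} evaluated at the fixed point to get $g_\gamma(p)^m=\id$, and continuity-plus-discreteness for the rigidity statement. The genuine gap sits exactly in the step you flag as delicate, and it is not merely technical. ``Shrinking the loop to $\bar p$'' only shows that $g_\gamma(p)^{\pm1}$ lies in the \emph{closure} of the conjugacy class $C$ of $M$: conjugacy classes in $\GL(n,\C)$ are not closed (a sequence of matrices all conjugate to a fixed nontrivial unipotent, or to a root of unity times such a unipotent, converges to a semisimple matrix). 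Continuity of eigenvalues does give that the eigenvalues of $M$ are $m$-th roots of unity, but the proposition asserts more, namely that $M$ is a root of the identity, i.e. semisimple of finite order. In your write-up the finite order $M^m=\id$ is deduced from $M\sim g_\gamma(p)$, which is in turn deduced from the limit argument --- so the reasoning is circular precisely where the content lies, and as written it cannot exclude a local monodromy of the form (root of unity)$\,\times\,$(nontrivial unipotent).

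The fix is short and stays inside your framework: traverse the small loop around $\bar p$ exactly $m$ times. Its lift is then a closed loop in the disc $D$ around $p$ which is contractible in $D$, so parallel transport of the flat smooth connection $\nabla$ along it is the identity, while the descent identifications compose, by the cocycle relation evaluated along the orbit of the basepoint, to $g_{\gamma^m}=g_{\id}=\id$; hence $M^m=\id$ directly. Once $M$ is semisimple its conjugacy class is closed, and your limit argument then legitimately upgrades ``$g_\gamma(p)^{\pm1}$ lies in the closure of $C$'' to actual conjugacy. This is exactly the ingredient the paper's Deligne-extension/singular-gauge computation ($g=z^{-kA}h(z)$) supplies. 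Separately, for the rigidity statement your argument presupposes a \emph{connected family} of equivariant connections along which the scalar ambiguity in $g_\gamma$ can be resolved continuously, whereas the claim concerns any two sup-norm-close connections, with no path of flat equivariant connections given a priori; the paper avoids this by noting that the monodromy is computed from parallel transport along a lifted arc in the fixed background trivialization, so standard ODE estimates compare the two monodromy matrices directly, and two nearby semisimple matrices with eigenvalues in the fixed finite set of $m$-th roots of unity must be conjugate.
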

 
\begin{proof}
Recall that the stabilizer group $Stab_p\subset\Gamma$ of a point $p\in\Sigma$ satisfies   $Stab_p=\Z_k$ for some $k\geq1,$ and is non-trivial if and only if $p$ is a branch point of $\pi_{\Gamma}\colon\Sigma\to\Sigma/\Gamma.$ The local monodromy around $\pi_{\Gamma}(p)=:q$ can then   be determined as follows: given $\phi$  a generator of $Stab_p$, one has that $\phi^k=\id$ and $g_\phi(p)^k=\id$.   Note that if $\tau(p)=\tilde p\neq p$ for some $\tau\in\Gamma,$ then the order of the stabilizer group of
 $\tilde p$ is the same as the one of $p,$ and the conjugacy class of the corresponding gauge $g_{\tau\circ\phi\circ\tau^{-1}}(\tilde p)$ as a subset of $\GL(n,\C)$ is also the same as
 the conjugacy class of $g_\phi(p)$. Then, $g_\phi(p)$ represents the conjugacy class
of the local monodromy around $q$, and this can be seen as follows. Indeed, consider   a singular connection $\tilde\nabla$ on the punctured disc. By means of the Deligne extension procedure, the connection $\tilde\nabla$ is gauge equivalent to
\begin{equation}\label{local_conncetion_covering}
d+A\frac{dz}{z}
\end{equation}
for some $A\in\g_\C.$ Then, near $p\in\Sigma$ the connection $\nabla=d+\omega$ is gauge equivalent via a singular (but single valued) gauge transformation $g$ to the pullback of 
\eqref{local_conncetion_covering} via the covering map $z\mapsto z^k,$ and this gauge transformation is invariant under $z\mapsto e^{\tfrac{2\pi i}{k}}z.$ On the other hand, the transformation 
$g$ can be written as
$g=z^{-kA} h(z)$
where $h$ is a single valued smooth map $U\subset\Sigma\to G_\C.$ The conjugacy class of the local monodromy of \eqref{local_conncetion_covering} is given by  $z^{-A}$
which proves the claim as $g$ is invariant and  $\omega=h^{-1}dh.$

Finally, in order to see that nearby irreducible flat equivariant connections $\nabla^1$ and $\nabla^2$ give rise to the same local monodromies on the quotient surface, 
  consider gauges $g^i_\phi$ for which  $\phi^*\nabla^i=\nabla^i\cdot g^i_\phi$ and $g^i_\phi(p)^k=\id$. 
  Note that the conjugacy class of $g^i_\phi$
   at a fixed point of $\phi$ as well as the local monodromy around the corresponding branch point are also determined by the parallel transport of the original connection along a (non-closed) lift
   of the small loop around the branch point on $\Sigma/\Gamma.$ Of course, we use the background trivialization $\underline{\C^n}\to\Sigma$ of the underlying bundle with respect to the trivial connection $d$
   to do so. Then, by standard estimates for solutions of linear ODE's, it follows that equivariant connections whose connection 1-forms are close with respect to the supremum
   norm give rise the same conjugacy class (as they  are roots of the identity).
\end{proof} 
%%%%%%%%%%%%%%%%%%%%%%%%%%%%%%%%%%%%%%%%%%%%%%%%
%%%%%%%%%%%%%%%%%%%%%%%%%%%%%%%%%%%%%%%%%%%%%%%%

%%%%%%%%%%%%%%%%%%%%%%%%%%%%%%%%%%%%%%%%%%%%%%%%
%%%%%%%%%%%%%%%%%%%%%%%%%%%%%%%%%%%%%%%%%%%%%%%%

%%%%%%%%%%%%%%%%%%%%%%%%%%%%%%%%%%%%%%%%%%%%%%%%

As mentioned in Section \ref{introduction},  subspaces of the smooth loci of the moduli space of $G_\C$-Higgs bundles, or equivalently,   $G_\C$-flat connections, may be holomorphic or Lagrangian with respect to some of the fixed complex structures and symplectic forms, giving what we refer to as $B$-branes and $A$-branes in those structures \cite{Kap}. In the forthcoming sections, we shall study different settings in which the space of $\Gamma$-equivariant flat irreducible $G_\C$-connections on a Riemann surface $\Sigma$ form branes.

 \subsection{Equivariant  $(B,B,B)$-branes}
%\subsection{Mid-dimensional equivariant branes
%}
\label{Branes_in_the_}\label{section_construction}\label{connections1}
In what follows we shall show  that  the moduli space of $\Gamma$-equivariant flat irreducible $G_\C$-connections on a Riemann surface $\Sigma$
is a well-defined hyper-K\"ahler submanifold of the moduli space of flat irreducible $G_\C$-connections on $\Sigma$. From the previous sections,  this space  is
a complex  submanifold of the moduli space of irreducible flat $G_\C$-connections on $\Sigma$ (with respect to the   structure $J$ induced by the complex group $G_\C$).

\begin{lemma}\label{lemma-1}
Let $\nabla$ be a $\Gamma$-equivariant flat irreducible $G_\C$-connection on $\Sigma$, and $g\colon \Sigma\to G_\C$ be a gauge transformation.
Then, $\nabla\cdot g$ is $\Gamma$-equivariant, and gives rise to the same point as $\nabla$ in the moduli space of flat (possibly singular) $G_\C$-connections on $\Sigma/\Gamma.$
\end{lemma}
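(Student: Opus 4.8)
The plan is to establish the two halves of the statement in turn. First I would show that $\nabla\cdot g$ is $\Gamma$-equivariant by producing its generalized homomorphism explicitly. Since pullback commutes with the gauge action and $\phi^*g=g\circ\phi$, one has $\phi^*(\nabla\cdot g)=(\phi^*\nabla)\cdot(g\circ\phi)=(\nabla\cdot g_\phi)\cdot(g\circ\phi)=\nabla\cdot\big(g_\phi\,(g\circ\phi)\big)$. Writing the right-hand side as $(\nabla\cdot g)\cdot\tilde g_\phi$ dictates the definition $\tilde g_\phi:=g^{-1}\,g_\phi\,(g\circ\phi)$, that is $\tilde g_\phi(p)=g(p)^{-1}g_\phi(p)\,g(\phi(p))$. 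It then remains to verify that $\phi\mapsto\tilde g_\phi$ is a generalized group homomorphism: the normalization $\tilde g_{\id}=\id$ is immediate, and the cocycle identity reduces to the one for $g_\phi$ after inserting $g(\tau(p))\,g(\tau(p))^{-1}=\id$ in the middle of $\tilde g_{\phi\circ\tau}(p)=g(p)^{-1}g_\tau(p)\,g_\phi(\tau(p))\,g(\phi\tau(p))$. The structural point is that $g$ does not depend on the group element, so these insertions telescope; this is exactly why the construction stays inside the class of equivariant connections. When $g$ and the $g_\phi$ are $\SL(n,\C)$-valued, so is $\tilde g_\phi$, so the $\SL(n,\C)$-refinement of Remark \ref{P_on_quotient} is preserved.

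For the second half I would argue by functoriality of the descent in Propositions \ref{no-fix-point-conncetion}--\ref{fix-point-conncetionI}. The gauge transformation $g$ is precisely an isomorphism of $\Gamma$-equivariant flat bundles from $(\underline{\C^n},\nabla,\{g_\phi\})$ to $(\underline{\C^n},\nabla\cdot g,\{\tilde g_\phi\})$: the identity $\tilde g_\phi=g^{-1}g_\phi\,(g\circ\phi)$ says exactly that the fibrewise map $(p,v)\mapsto(p,g(p)v)$ (or its fibrewise inverse, depending on the side conventions in the descent) carries the equivalence relation defining the quotient bundle for $\nabla$ to the one defining it for $\nabla\cdot g$. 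Because passing to the quotient over $\Sigma/\Gamma$, respectively over $\Sigma/\Gamma-B$, is functorial, $g$ descends to a bundle isomorphism $\bar g\colon E_\nabla\to E_{\nabla\cdot g}$; and since $g$ takes $\nabla$ to $\nabla\cdot g$ upstairs, $\bar g$ intertwines the induced flat connections $\tilde\nabla$ and $\widetilde{\nabla\cdot g}$. Hence the two descend to gauge-equivalent connections, i.e. to the same point of the moduli space on $\Sigma/\Gamma$. In the ramified case I would add that, $g$ being globally defined and single-valued, the conjugacy classes of the local monodromies around the branch points are unchanged (cf. Proposition \ref{fix-point-conncetion}), so the descended connections agree as flat connections on $\Sigma/\Gamma-B$ with the same singularity type; irreducibility, being gauge invariant, is also preserved, so the image lies in the same stratum.

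The step I expect to be most delicate is pinning down this intertwining map with the correct left/right and inverse conventions for the gauge action --- unambiguously verifying which of $(p,v)\mapsto(p,g(p)v)$ and its inverse respects the two equivalence relations, and that it conjugates $\tilde\nabla$ into $\widetilde{\nabla\cdot g}$ rather than the reverse. Once the conventions of Proposition \ref{no-fix-point-conncetion} are fixed this is routine, but it is where side errors most naturally arise; the explicit formula $\tilde g_\phi=g^{-1}g_\phi\,(g\circ\phi)$ from the first part is precisely what selects the correct choice.
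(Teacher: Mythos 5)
Your proof is correct and is essentially the argument the paper leaves implicit: the paper offers no proof beyond asserting that the lemma ``follows from the definition of $\Gamma$-equivariant flat connections,'' and your explicit formula $\tilde g_\phi = g^{-1}\,g_\phi\,(g\circ\phi)$, the verification of the cocycle identity, and the descent-functoriality argument supply precisely the missing details. The side-convention subtlety you flag is genuine --- with the paper's cocycle convention $g_{\phi\circ\tau}(p)=g_\tau(p)\,g_\phi(\tau(p))$ the intertwining map consistent with the equivalence relation is $(p,v)\mapsto(p,g(p)^{-1}v)$ --- but you identify it explicitly and your formula for $\tilde g_\phi$ resolves it correctly.
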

 
% 
% 
% 
%\begin{proof} From the definition of $\Gamma$-equivariant flat connections, one can see that if  $\nabla$ is $\Gamma$-equivariant, so is $\nabla \cdot g$. Moreover, one should note that the construction of the connection $\tilde\nabla$ on the quotient $\Sigma/\Gamma$ in Proposition
% \ref{no-fix-point-conncetion}, respectively Proposition \ref{fix-point-conncetion}, does not  depend on $\nabla$ but rather on the gauge class of $\nabla$ and on the conjugacy class of $g_\phi(p)$, for $p$  a fix point of the automorphism $\phi\in\Gamma.$ But for irreducible flat connections, $g_\phi(p)$ is uniquely determined as an element in the space spanned by parallel endomorphisms. Therefore, from the  proof of Proposition \ref{no-fix-point-conncetion} the lemma follows.
%\end{proof}
% 
% 

The above lemma follows from  the definition of $\Gamma$-equivariant flat connections, and therefore by Propositions \ref{no-fix-point-conncetion}, \ref{fix-point-conncetionI} and   \ref{fix-point-conncetion}, the  connected components of the moduli space of $\Gamma$-equivariant flat irreducible $G_\C$-connections can be locally  identified with open subsets of the moduli space of flat irreducible connections on $\Sigma/\Gamma$ with monodromies in fixed conjugacy classes determined by the branch order of $\Sigma\to\Sigma/\Gamma$ and by the component. 
One should note that the corresponding moduli spaces of irreducible flat connections on $\Sigma$ and $\Sigma/\Gamma$ are in general not globally the same, 
as can be seen in the following example.
\begin{example}\label{example_3}
Consider a Riemann surface $\Sigma$ of genus $3$ which admits a fixed point free involution $\phi\colon\Sigma\to\Sigma$  giving a double covering to a Riemann surface\linebreak $M:=\Sigma/\Z_2$ of genus $2.$
Let $\nabla$ be a flat unitary line bundle connection such that $\nabla$ is not self-dual, i.e., $\nabla^*$ is not gauge equivalent to $\nabla,$ and
such that $\phi^*\nabla=\nabla^*$. 
Then, $\nabla\oplus\nabla^*$ is a $\Z_2$-equivariant flat  connection,
whose corresponding flat connection on $\Sigma/\Z_2$ is irreducible. A more extensive analysis of this set up is given in Section \ref{genus_3} to illustrate the results of the paper. 
\end{example}

As mentioned previously, by Hitchin's work \cite{N1} and generalizations thereof, the space of irreducible flat $G_\C$-connections is a hyper-K\"ahler manifold. %, see also Section \ref{Branes_in_the_}.
 In this context, one has the following results.

 \begin{theorem}\label{hyperkaehlersub}
Let $\Sigma$ be a compact Riemann surface of genus $g\geq2$ and $\Gamma$ be a finite group acting on $\Sigma$ by holomorphic automorphisms. % (with or without fixed points). 
Then, the  connected components of the space of gauge equivalence classes of irreducible $\Gamma$-equivariant flat $G_\C$-connections are hyper-K\"ahler submanifolds
of the moduli space of flat irreducible $G_\C$-connections on $\Sigma,$ for  $G_\C$ a semi-simple Lie group. 
\end{theorem}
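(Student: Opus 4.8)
The plan is to establish three things: that each connected component of $\Gamma$-equivariant irreducible flat $G_\C$-connections is (i) a complex submanifold with respect to each of the three complex structures $I$, $J$, $K$, and (ii) a smooth submanifold of the hyper-Kähler moduli space $\CM_{G_\C}$, whence (iii) it automatically inherits the hyper-Kähler structure by restriction. The key observation driving the argument is that the $\Gamma$-equivariance condition is expressed purely in terms of the \emph{holomorphic} data: a gauge transformation intertwining $\phi^*\nabla$ and $\nabla$ is a holomorphic condition, so the equivariant locus is cut out holomorphically in the complex structure $J$ coming from $G_\C$. The excerpt already notes this for $J$; the heart of the matter is to upgrade this to all three structures simultaneously.

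\textbf{First} I would use the identification, from Propositions \ref{no-fix-point-conncetion}, \ref{fix-point-conncetionI} and \ref{fix-point-conncetion} together with Lemma \ref{lemma-1}, of a connected component of the equivariant moduli space with an open subset of the moduli space of irreducible flat connections on the quotient $\Sigma/\Gamma$ (with fixed conjugacy classes of local monodromy at the branch points $B$). Since that parabolic/punctured moduli space is itself a smooth hyper-Kähler manifold by Hitchin's construction and its extensions to the parabolic setting, this exhibits the equivariant component as smooth and finite-dimensional, settling the manifold structure. \textbf{Then} I would realize the equivariant locus as a fixed-point set. The finite group $\Gamma$ acts on the full moduli space $\CM_{G_\C}$ by $[\nabla]\mapsto[\phi^*\nabla]$; an irreducible connection is $\Gamma$-equivariant precisely when its class is fixed by this action. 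The standard principle is that the fixed-point set of a finite (hence compact) group of \emph{tri-holomorphic isometries} of a hyper-Kähler manifold is a totally geodesic hyper-Kähler submanifold. So the decisive step is:

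\textbf{The hard part will be} verifying that each $\phi\in\Gamma$ acts on $\CM_{G_\C}$ by a tri-holomorphic isometry, i.e.\ that pullback by the holomorphic automorphism $\phi$ preserves all three complex structures $I,J,K$ and the hyper-Kähler metric. Preservation of $J$ is clear from the holomorphic nature of the construction. For $I$ and $K$ one must check that a biholomorphism $\phi\colon\Sigma\to\Sigma$ induces, at the level of solutions to Hitchin's equations, a map compatible with the Higgs field data: pullback sends a solution $(A,\Phi)$ of $F_A+[\Phi,\Phi^*]=0$, $d_A''\Phi=0$ to another such solution, because $\phi$ is holomorphic and conformal (being a holomorphic automorphism of a Riemann surface) and hence preserves the relevant $(1,0)$ and $(0,1)$ decompositions and the $L^2$ metric on solutions. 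One should be careful here: the equivariance allows an accompanying gauge transformation $g_\phi$, so strictly the relevant action is $[\nabla]\mapsto[\phi^*\nabla]$ modulo gauge, and one checks that gauge transformations act tri-holomorphically and isometrically as well. Granting that $\Gamma$ acts by tri-holomorphic isometries, I would invoke the fixed-point-set theorem for hyper-Kähler manifolds to conclude that each connected component of the (irreducible) fixed locus is a hyper-Kähler submanifold of $\CM_{G_\C}$.

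\textbf{Finally}, the passage to $(B,B,B)$-branes is immediate: a submanifold that is complex with respect to each of $I$, $J$, $K$ is by definition a $B$-brane in all three structures, so the hyper-Kähler (equivalently, tri-holomorphic) submanifolds produced above are precisely $(B,B,B)$-branes in $\CM_{G_\C}$. The restriction to $G_\C$ semisimple enters only to ensure that Hitchin's hyper-Kähler construction and the irreducibility/smoothness statements apply cleanly, and that the stabilizer computation preceding Lemma \ref{lemma-1} controls the gauge group so that the quotient is a manifold at irreducible points.
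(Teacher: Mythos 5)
Your proposal is correct in substance but follows a genuinely different route from the paper. The paper never invokes a fixed-point theorem: it first proves $J$-holomorphicity by showing that pull-back plus desingularization, from the moduli space of flat (possibly singular) connections on $\Sigma/\Gamma$ to the moduli space on $\Sigma$, is a complex-analytic immersion, the injectivity of the differential being checked with the Atiyah--Bott/Goldman pairing $\Omega(X,Y)=\int\tr(X\wedge Y)$ and its compatibility with $\pi^*$; it then obtains $I$-holomorphicity by showing that equivariance passes from the flat connection to the associated Higgs pair, using a $\Gamma$-invariant hermitian metric and the uniqueness of solutions of the self-duality equations (which forces the gauges $g_\phi$ to be unitary); holomorphicity in both $I$ and $J$ then yields $K$ and the hyper-K\"ahler property. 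Your route --- let $\Gamma$ act on $\CM_{G_\C}$ by pull-back, check that this action is by tri-holomorphic isometries (pull-back by a biholomorphism preserves the type decomposition and the $L^2$ metric, which on a surface depends only on the conformal structure), and quote the standard fact that fixed-point sets of finite groups of tri-holomorphic isometries are hyper-K\"ahler submanifolds --- is cleaner and gives total geodesy for free; note, however, that your ``hard part'' of matching the pull-back action on flat connections with the pull-back action on Hitchin pairs is exactly where the paper's uniqueness-of-the-harmonic-metric argument re-enters, so that ingredient is repackaged rather than avoided. What the paper's approach buys in exchange is the explicit local identification of each component with an open subset of a moduli space of parabolic/singular connections on $\Sigma/\Gamma$, which is what the rest of the paper actually uses (the dimension counts in Proposition \ref{only2} and Theorem \ref{only1}, and the description of the branes inside the Hitchin fibration).

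One step you should state more carefully: it is not true that an irreducible class is $\Gamma$-equivariant ``precisely when'' it is fixed by every $\phi\in\Gamma$. Being fixed gives, for each $\phi$, some gauge $g_\phi$ with $\phi^*\nabla=\nabla\cdot g_\phi$, unique only up to the stabilizer of $\nabla$, i.e.\ up to central scalars; Definition \ref{hola1} additionally demands that the $g_\phi$ can be chosen to form a generalized group homomorphism, and the obstruction to doing so is a class in $H^2(\Gamma,\C^*)$ (a Schur-multiplier-type obstruction; compare the $\SL$ versus $\PSL$ dichotomy in Remark \ref{P_on_quotient} and Remark \ref{invariantsc}). So the fixed-point set may a priori be strictly larger than the equivariant locus. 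This does not sink your argument: the obstruction class is locally constant along the fixed-point set, so the equivariant locus is open and closed in it, and each connected component of the equivariant locus is therefore a full component of the fixed-point set, hence hyper-K\"ahler by your fixed-point theorem. But the identification must be asserted at the level of connected components, not pointwise.
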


\begin{proof}
For  $G_\C$ a semi-simple Lie group, by Lemma \ref{lemma-1} any connected component of the space of gauge equivalence classes of irreducible $\Gamma$-equivariant flat $G_\C$-connections is locally
identified with an open subspace of the moduli space of irreducible (possibly singular at the branch points) connections on $\Sigma/\Gamma.$ 
This identification is given by taking the pull-back of the connection and, if necessary,
applying a singular gauge transformation which gauges the singularities of the pull-back connection at the fixed points of $\Gamma$  away. We call the later gauge transformation a desingularization.
We claim that taking the pull-back and desingularization
gives rise to  a (complex analytic) immersion
from the smooth part of the moduli space  of flat (possibly singular) connections on $\Sigma/\Gamma$ to the smooth part of the moduli space
of flat connections on the Riemann surface 
$\Sigma$. 

Since the complex structure $J$ is respected by the pull-back and desingularization,  it remains to prove that
the differential of taking the pull-back and desingularization is injective at every irreducible gauge class which pulls back to an irreducible gauge class.
In order to do so we note that we can work with singular connections $\nabla$ on $\Sigma/\Gamma$  which are of a local standard form $d+A\tfrac{dz}{z}$
as in \eqref{local_conncetion_covering} at any branch point. %Also, we may assume without loss of generality that any two eigenvalues of $A$ do not differ by non-zero integers.
 Let $X\in\Omega^1(\Sigma/\Gamma,\mathfrak{g}_\C)$ represent  a tangent vector which can be chosen to vanish in an open neighborhood of the singular points of $\nabla.$  %and we assume that this is the trivial tangent vector.
 If $X$ is not trivial, then there exists a second tangent vector $Y\in\Omega^1(\Sigma/\Gamma,\mathfrak{g}_\C)$ (satisfying $d^\nabla Y=0$ as well), which also vanishes in an open neighborhood of the singular points of $\nabla$,
 such that
 % Hence, let $\xi\in\Gamma(\Sigma,\mathfrak{g}_\C)$ satisfy
%\[\pi^*\nabla.g\xi=g^{-1}\pi^*X g.\]
%Because the connection $\pi^*\nabla$ is irreducible, $\xi$ is unique. Therefore, we deduce that
%\[\tilde\xi=g\xi g^{-1}\] is a $\Gamma$-invariant solution to
%\[\pi^*\nabla\tilde\xi=\pi^*X.\]
%Because of the assumption on the eigenvalues of $A$, there exists a smooth $\underline\xi\in\Gamma^1(\Sigma/\Gamma,\mathfrak{g}_\C)$ satisfying
%\[\pi^*\underline\xi=\tilde\xi\]
%and
%\[\nabla\underline\xi=X,\]
%so $X$ already represents the trivial tangent vector.
for the natural symplectic structure $\Omega$ on the moduli space of 
flat connections with fixed local conjugacy classes (as defined in \cite{AlMa})
\[0\neq\Omega_{\Sigma/\Gamma}(X,Y)=\int_{\Sigma/\Gamma} \text{tr}(X\wedge Y),\]
where $\text{tr}$ denotes the Killing form on $\mathfrak{g}_\C.$
Let us write $\pi$ for $\pi_\Gamma$, this is, for the covering $\pi\colon \Sigma\to\Sigma/\Gamma$.
 Then there is a singular gauge transformation $g$ such that
 $\pi^*\nabla.g$ is smooth on $\Sigma.$ The tangent vectors $X$ and $Y$ are then mapped to the smooth 1-forms 
 $g^{-1}\pi^*X g$  and $g^{-1}\pi^*Y g$ representing the corresponding tangent vectors in the moduli space of flat connections on $\Sigma.$
We obtain
\[\Omega_{\Sigma}(g^{-1}\pi^*Xg,g^{-1}\pi^*Yg)=\int_\Sigma\text{tr}(g^{-1}\pi^*Xg\wedge g^{-1}\pi^* Yg)=\int_\Sigma\text{tr}(\pi^*X\wedge \pi^* Y)\neq0.\]
Therefore, $g^{-1}\pi^*X g$  does not vanish in the first cohomology of $d^\nabla$ and represents a non-trivial tangent vector.
Hence, pull-back and desingularization is an immersion and therefore the moduli space of irreducible $\Gamma$-equivariant flat $G_\C$-connections is
a complex submanifold
with respect to the complex structure $J$.

% coming from the complex group $G_\C$.
In order to show that it is also a complex submanifold with respect to the complex structure $I$ 
%(which makes the space of  solutions of the self-duality equations on the Riemann surface modulo gauge transformations the moduli ), 
 one needs to make use of the
uniqueness of solutions of the self-duality equations.  Fixing a trivialization of the underlying $C^\infty$ bundle in order to work on $\underline{\C^n}\to\Sigma,$ consider the 
standard hermitian metric $h$ on  $\underline{\C^n}\to\Sigma$ which is invariant under $\Gamma.$
Consider    an  irreducible $\Gamma$-equivariant flat $G_\C$-connection $\nabla$  such that $(\nabla,h)$ satisfies the self-duality equations, i.e., so that 
$\nabla=\nabla^u+\Phi+\Phi^*$
where $\nabla^u$ is unitary with respect to $h$ and $\Phi^*$ is the adjoint of the holomorphic Higgs field $\Phi$ (with respect to $h$). 
Now, for $\phi\in\Gamma$ one has that $\phi^*\nabla=\nabla  \cdot g_\phi$ decomposes
into harmonic parts as
$\phi^*\nabla=\phi^*\nabla^u+\phi^*\Phi+\phi^*\Phi^*,$
since $h$ is $\Gamma$-invariant.
Then, from   the uniqueness of solutions of the self-duality equations for stable pairs/irreducible connections (e.g. see \cite{N1} in the case of $G_\C=\SL(2,\C)$), 
the gauge transformation $g_\phi$ must already be unitary. Therefore, since  $\nabla$ is $\Gamma$-equivariant,  the corresponding Higgs pair $((\nabla^u)^{(0,1)},\Phi)$ is also $\Gamma$-equivariant (in the same sense as defined for connections), which proves the theorem.
\end{proof} 

\begin{remark}
From \cite[Corollary 3.9]{Steer95}, given any orbifold Riemann surface $\tilde\Sigma$ with negative Euler characteristic, there exists a smooth  compact Riemann surface $\Sigma$ with an action of a finite group $\Gamma$, such that $\tilde\Sigma=\Sigma/\Gamma$, and thus the analysis done in this paper could be translated in terms of Higgs bundles on orbifold Riemann surfaces. Moreover, through \cite{Steer95} one also knows that the spaces of Theorem \ref{hyperkaehlersub} are non-empty. See also the work of Anderson and Grove on equivariant bundles under group actions in \cite{andersen} for further analysis of equivariant bundles and their correspondence with parabolic bundles, and non-emptiness of these spaces. \label{orbi} \end{remark}

 From Remark \ref{orbi}, properties of the $(B,B,B)$-branes constructed through finite group actions can be deduced from \cite{Steer95}. From Theorem \ref{hyperkaehlersub} and \cite[Section 3D]{Steer95} one can describe the spaces of $\Gamma$-equivariant flat $G_{\C}$-connections as $(B,B,B)$-branes also in the case of actions with no fixed points. 
Indeed, consider $G_\C=\SL(n,\C),\GL(n,\C)$: 
in these cases  the spaces of gauge equivalence classes of irreducible $\Gamma$-equivariant flat $G_\C$-connections are   hyper-K\"ahler submanifolds
of the moduli spaces of flat irreducible $G_\C$-connections on $\Sigma,$ and are naturally covered by an open dense subset of the hyper-K\"ahler moduli space of irreducible flat $G_\C$-connections (or possibly $\PSL(n,\C)$-connections, as in  Remark \ref{P_on_quotient}) on the surface $\Sigma/\Gamma$.

%\begin{remark}
%Through the monodromy of rank 2 Higgs bundles  given in \cite{mono15} on equivariant points, one can deduce connectivity of the equivariant branes described in this section. 
%\end{remark}

 \subsection{Equivariant $(A,B,A)$-branes}\label{aba}
 
By considering a real structure $f:\Sigma\rightarrow\Sigma$ on the Riemann surface, it was shown in \cite{aba, slices} how to construct and study families of $(A,B,A)$-branes in the moduli space of $G_{\mathbb{C}}$-Higgs bundles. In particular, for $\xi$ the compact anti-holomorphic involution of $G_{\mathbb{C}}$, the fixed point set of  
$$
i_2(\bar \partial_A, \Phi):=(f^*(  \partial_A),f^*( \Phi^*  ))= (f^*(\xi(\bar \partial_A)), -f^*( \xi(\Phi) )),
$$
defines an $(A,B,A)$-brane which lies in the Hitchin fibration for $G_{\C}$-Higgs bundles $(\bar \partial_A, \Phi)$ as a real integrable system \cite[Theorem 17]{aba}. 
%The number of tori in each fibre, as well as the connectivity of the brane, can be deduced from the invariants of the real structure $f$.
Moreover, for $\Sigma$ a hyperelliptic curve of genus 3, from \cite[Section 6]{GH81} and \cite[Appendix A]{aba} one can deduce further characteristics of the brane. Therefore, these Riemann surfaces shall be taken as toy models along this paper.

Following Section \ref{Branes_in_the_}, one may  consider both an orientation reversing involution with fixed points, and a finite group action on the Riemann surface $\Sigma$. By taking both involutions together, and looking at the induced action on the moduli space of flat connections, one would obtain the intersection of a $(B,B,B)$-brane and an $(A,B,A)$-brane. The compatibility and classification of these involutions is done in \cite{finite10}, the $(B,B,B)$-branes would be as described in Section \ref{section_construction}, and the $(A,B,A)$-branes as described in \cite{aba}. Therefore, by considering these two actions, one obtains natural mid-dimensional $(A,B,A)$-branes inside the $(B,B,B)$-brane, which  shall be referred to as CMC-branes.
 Finally, in \cite[Section 8]{BF96} the equivariant cohomology for equivariant bundles is calculated in terms of the action of $\Gamma$ on the usual cohomology, providing the tools to understand the cohomology of the   CMC-branes.

\section{Mid-dimensional equivariant branes}\label{CMC_2}
 
Mid-dimensional submanifolds  of the moduli space $\mathcal{M}_{G_{\mathbb{C}}}$ of $G_{\mathbb{C}}$-Higgs bundles appear to be of particular interest, as these can be both $B$-branes and $A$-branes (see, for example, the families constructed in \cite{aba,slices}). Thus, in what follows    mid-dimensional hyper-K\"ahler submanifolds coming from $\Gamma$ equivariant flat $G_\C$-connections shall be described, and a study of different finite group actions on  $\Sigma$ which lead to mid-dimensional equivariant branes shall be carried through. 

 Whilst some of the results presented in this paper can be deduced for higher rank groups, the present manuscript shall focus on the group $G_\C=\SL(2,\C)$. We denote by $\mathcal M^g_{dR}$ the moduli space of flat $\SL(2,\C)$-connections on a compact Riemann surface of genus $g$, and for simplicity drop the group label, but maintain the label for the genus since it will become of use at several stages of our analysis. 
Recall that the real dimension of this space is 
%\begin{equation}\label{dim1}
$\dim\mathcal M^g_{dR}=6g-6,$ %\end{equation}
and the dimension of  the moduli space $\mathcal M^{\gamma,n}_{dR}$ of flat $\SL(2,\C)$-connections on an $n$-punctured Riemann surface of genus $\gamma$ 
with fixed local monodromies (with simple eigenvalues) is
\begin{equation}\label{dim2}
\dim \mathcal M^{\gamma,n}_{dR}=6\gamma-6+2n,
\end{equation}
 provided that either $\gamma\geq 2$, or that $\gamma\geq1$  and $n\geq1$,  or finally that  $\gamma=0$ and $n\geq4$. Otherwise, the dimension of $\mathcal M^{\gamma,n}_{dR}$  is either $0$ or $2$. Note that the corresponding moduli spaces of flat $\PSL(n,\C)$-connections have the same dimensions
 as their $\SL(n,\C)$ counterparts.
In order to study mid-dimensional subspaces of $\mathcal M^g_{dR}$ coming from finite group actions, note that  the dimension of the space of $\Gamma$-equivariant $\SL(2,\C)$-connections and the order of the group $\Gamma$ are closely related. 

 %%%%

\begin{proposition}\label{only2}
Let $\Sigma$ be a compact Riemann surface of genus $g\geq2$, and $\Gamma$ be a  finite group of order $h$ acting on  $\Sigma$ by holomorphic automorphisms.
  If a component of the moduli space of equivariant flat $\SL(2,\C)$-connections on $\Sigma$ has half the dimension
of the moduli space $\mathcal M^g_{dR}$, then $h=2^k$ for some $k\in\N.$
\end{proposition}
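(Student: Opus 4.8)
The plan is to convert the half-dimension hypothesis into an arithmetic identity by descending to the quotient orbifold, and then to play this identity against Riemann--Hurwitz, using only the elementary facts $k_i\geq 2$ and $k_i\mid h$, to force every prime dividing $h$ to be $2$.

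First I would pass to the quotient. By Propositions~\ref{no-fix-point-conncetion}, \ref{fix-point-conncetionI} and~\ref{fix-point-conncetion}, a connected component of the moduli space of $\Gamma$-equivariant flat $\SL(2,\C)$-connections is locally modelled on a moduli space of flat $\SL(2,\C)$- (or, in the situation of Remark~\ref{P_on_quotient}, $\PSL(2,\C)$-) connections on $\Sigma/\Gamma$ with prescribed regular conjugacy classes at the images of the branch points. Let $\gamma$ be the genus of $\Sigma/\Gamma$ and let $q_1,\dots,q_n$ be the branch points carrying a regular (simple-eigenvalue) local monodromy; then \eqref{dim2} evaluates the dimension of the component as $6\gamma-6+2n$. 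The half-dimension hypothesis therefore reads
\[6\gamma-6+2n=\tfrac12\dim\mathcal M^g_{dR}=3(g-1),\]
so that $3\mid n$ and $g-1=2(\gamma-1)+\tfrac{2n}{3}$.

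Next I would insert the geometry of the cover. Each $q_i$ has stabilizer $\Z_{k_i}$ with $k_i\geq 2$ and $k_i\mid h$, and in the main case (always so in the $\PSL(2,\C)$ picture) these are all of the branch points, so Riemann--Hurwitz for $\pi_\Gamma\colon\Sigma\to\Sigma/\Gamma$ gives
\[2(g-1)=h\Big(2(\gamma-1)+\sum_{i=1}^{n}\big(1-\tfrac1{k_i}\big)\Big).\]
Eliminating $g$ with the dimension identity leaves a single relation in $h,\gamma,n$ and the $k_i$. The efficient way to use it is to discard the $k_i$ via the crude bound $\sum_i 1/k_i\leq n/2$ coming from $k_i\geq 2$; substituting turns the relation into the inequality
\[4(\gamma-1)(h-2)+\tfrac{n}{3}\,(3h-8)\leq 0.\]

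The arithmetic endgame is a short trichotomy on $\gamma$, and I expect it to be the main obstacle. For $\gamma\geq 2$ both terms are nonnegative as soon as $h\geq 3$, so $h=2$; the case $\gamma=1$ collapses because it forces $g=1$. The subtle case is $\gamma=0$, where the inequality only bounds $h$ from above: imposing $g\geq 2$ (equivalently $n\geq 6$) yields $h\leq 4$, so the only value left to exclude is $h=3$. Here I would use $k_i\mid h$ decisively: for $h=3$ every $k_i$ equals $3$, hence $\sum_i 1/k_i=n/3$, and feeding this back into the combined relation forces $n=3$ and thus $g=1$, a contradiction. Consequently no odd prime divides $h$, i.e.\ $h=2^k$; in fact the surviving values are exactly $h=2$ and $h=4$, which is the arithmetic skeleton of Theorem~\ref{only1}. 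The two points demanding genuine care are (i) the choice of which branch points carry regular monodromy, since for honestly $\SL(2,\C)$-valued gauges an order-$2$ stabilizer gives the central monodromy $-\id$ and contributes nothing to the dimension, whereas in the $\PSL(2,\C)$ picture of Remark~\ref{P_on_quotient} it contributes $+2$, so the set $\{q_i\}$ entering \eqref{dim2} and the set entering Riemann--Hurwitz must be reconciled (a parallel but longer bookkeeping covers the $\SL$ case and yields the same conclusion); and (ii) checking that each surviving $h$ is realised by an actual branch datum, which is precisely where the finer classification of Theorem~\ref{only1} takes over.
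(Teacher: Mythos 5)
Your proof is correct, but it takes a genuinely different route from the paper's. Both arguments run on the same fuel --- the dimension count \eqref{dim2} for punctured moduli on the quotient, Riemann--Hurwitz, and a case split on the quotient genus $\gamma$ --- yet the decompositions differ. The paper localizes at primes: by Cauchy's theorem each prime $p\mid h$ yields a subgroup $\Z_p\subseteq\Gamma$; since the $\Gamma$-equivariant component sits inside the $\Z_p$-equivariant moduli, it only needs the one-sided inequality $\tfrac12\dim\mathcal M^g_{dR}\le\dim\mathcal M^{\gamma,n}_{dR}$ applied to the $\Z_p$-cover, and the case analysis on $\gamma$ forces $p=2$; this gives $h=2^k$ with no control on $k$. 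You instead quotient by all of $\Gamma$ at once, turn the half-dimension hypothesis into an exact identity, and bound $h$ itself, obtaining the stronger statement $h\in\{2,4\}$, which already absorbs part of the later work of Theorem \ref{only1}. The price of working with an equality is exactly the bookkeeping you flag in (i): branch points whose local monodromy is central contribute $0$, not $2$, to \eqref{dim2}, so your identity holds with the number $n'$ of regular punctures, which may be smaller than the Riemann--Hurwitz count $n$ --- and this can happen even in the $\PSL(2,\C)$ picture, so your parenthetical ``always so'' is not justified; the paper's inequality formulation is immune to this distinction, which is what makes its argument shorter. Your argument does survive the repair: the trichotomy only uses $\sum_i 1/k_i\le n/2$ together with $n\ge n'$, and in the critical case $h=3$, $\gamma=0$ (where all $k_i=3$) the exact relations give $n=1+\tfrac{2n'}{3}$, which combined with $n'\le n$ and $3\mid n'$ forces $n'\in\{0,3\}$, hence $g\le 1$, a contradiction either way. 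The one step you should add explicitly is the exclusion of the degenerate cases of \eqref{dim2} ($\gamma=0$, $n\le 3$ and $\gamma=1$, $n=0$, where the dimension is $0$ or $2$, less than $3(g-1)$); the paper disposes of these in one line, and your $\gamma=0$ step, which derives $n'\ge 6$ from the identity, tacitly assumes it.
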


\begin{proof}
In order to show that all prime factors of $h$ are $2$, note that for each prime factor $p$ of $h$ there exists a subgroup $\Gamma_p$ of $\Gamma$ such that
$\Gamma_p\cong\Z_p.$
Considering the induced group action
$\Z_p\times\Sigma\to\Sigma,$  by Riemann-Hurwitz the genera $g$ and $\gamma$ of the Riemann surfaces $\Sigma$ and $\Sigma/\Z_p$, respectively, satisfy 
\begin{equation}\label{bf}
 n(p-1)=2(g-1+p(1-\gamma)),
\end{equation}
where, as in Proposition \ref{fix-point-conncetionI},  $n=|B|$. Thus since $\dim \mathcal{M}^g_{dR}=6g-6$ one has that
\begin{equation}\label{dim1.1}
\dim \mathcal M^g_{dR}=3 n(p-1)+6 p(\gamma-1).
\end{equation}
From Section \ref{section_finite},  any component of the moduli space of flat $\Gamma$-equivariant $\SL(2,\C)$-connections on $\Sigma$ can be identified with an open subspace of the moduli space of flat $\SL(2,\C)$ or  $\PSL(2,\C)$-connections on a $n$-punctured compact Riemann surface of genus $\gamma$, for $n\in \N$ satisfying \eqref{bf}, and hence 
\begin{equation}
\label{gngamma}
3 n(p-1)+6 p(\gamma-1)\leq 12(\gamma-1)+4n.
\end{equation}
Moreover, the inequality is also valid in the special case of invariant flat connections, where one has to consider $\mathcal M_{dR}^{\gamma}$ instead of $\mathcal M^{\gamma,n}_{dR}.$
One should note that the exceptional cases in \eqref{dim2}, i.e. the case of $\gamma=0$ and $n\leq3$, and of   $\gamma=1$ and $n=0$, are excluded by dimensional reasons: indeed, in these cases $\dim \mathcal M^g_{dR}\geq6> 4\geq 2\mathcal M^{\gamma,n}_{dR}$.
 
When the genus is $\gamma>1$, the inequality \eqref{gngamma}   only holds if $p=2$. Similarly, when the genus is $\gamma=1$, equation  \eqref{gngamma} holds only when  $p=2$ (since $\gamma=1$ and $n=0$ imply that $g=1$ which would contradict the assumption that $g\geq 2$). Finally, when $\gamma=0$ the inequality  \eqref{gngamma} is equivalent to
\begin{equation}
\label{gngamma2}
(3p-7)n\leq 6(p-2),
\end{equation}
which holds only when $p=2$,  since when $n=2$ the Riemann surface $\Sigma$ would have genus $0$ which would contradict again the assumption that $g\geq 2$.
\end{proof}

\begin{theorem}\label{only1}\label{theorem_3}
Let $\Sigma$ be a compact Riemann surface of genus $g\geq2$ and $\Gamma$ be a finite group   acting on  $\Sigma$ by holomorphic automorphisms
 such that  a component of the moduli space of $\Gamma$-equivariant flat $\SL(2,\C)$-connections on $\Sigma$ has half the dimension
of the moduli space $\mathcal M^g_{dR}$.
Then one of the following holds:
\begin{itemize}
\item[(I)]  $\Gamma=\Z_2$ acts by a fixed points free involution $\tau$ on $\Sigma$, or
\item[(II)]  $\Sigma$ is hyperelliptic of genus $3$ and $\Gamma=\Z_2\times \Z_2.$
\end{itemize}
In the later case (II), one of the $\Z_2$-factors corresponds to the hyperelliptic involution $\psi$, whilst the other $\Z_2$-factor corresponds to an involution $\rho$ with 4 fixed points. 
\end{theorem}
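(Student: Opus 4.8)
The plan is to turn the half-dimensionality hypothesis into a Diophantine problem governed by Riemann--Hurwitz, starting from Proposition \ref{only2}, which already guarantees $|\Gamma|=h=2^k$. By the discussion following Lemma \ref{lemma-1}, a half-dimensional component is locally modelled on the moduli space of flat $\SL(2,\C)$- or $\PSL(2,\C)$-connections on the quotient $\Sigma/\Gamma$ with fixed local monodromies at the $n=|B|$ branch points, so that \eqref{dim2} gives $\tfrac12\dim\mathcal M^g_{dR}=3(g-1)=6(\gamma-1)+2n$, where $\gamma$ is the genus of $\Sigma/\Gamma$. Writing $e_q\mid h$ for the ramification index over $q\in B$ and $R=\sum_{q\in B}(1-e_q^{-1})$, the orbifold Riemann--Hurwitz formula reads $2(g-1)=h\,[\,2(\gamma-1)+R\,]$. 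First I would record the elementary bounds $\tfrac12\le 1-e_q^{-1}<1$, i.e. $\tfrac{n}{2}\le R<n$, which are the only inputs from the ramification data that the argument needs.

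Next I would eliminate $n$ and $R$ between the two relations: solving the dimension identity for $n$ and the Riemann--Hurwitz identity for $R$ and feeding them into $R\ge \tfrac{n}{2}$ yields $\tfrac{2}{h}\ge \tfrac34+\tfrac{\gamma-1}{2(g-1)}$. For $h\ge 4$ the left-hand side is at most $\tfrac12$, forcing $\gamma-1<0$, hence $\gamma=0$; substituting $\gamma=0$ back and using $g\ge2$ together with the integrality of $n$ (which forces $g$ odd) turns $R\ge\tfrac{n}{2}$ into $h\le \tfrac{8(g-1)}{3(g-1)-2}$, which is already $<4$ once $g\ge5$. Thus $h\ge4$ compels $g=3$, $h=4$, $\gamma=0$, $n=6$, and then $R=3$ with six summands forces every $e_q=2$. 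For the remaining possibility $h=2$ I would instead substitute the $\Z_2$ Riemann--Hurwitz relation $n=2(g-1)-4(\gamma-1)$ into the dimension identity; this collapses to $g=2\gamma-1$ and hence $n=0$, so a half-dimensional $\Z_2$-brane can only come from a fixed-point-free involution. This is exactly case (I).

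It then remains to analyse $h=4$. A group of order $4$ is either $\Z_4$ or $\Z_2\times\Z_2$, and I would exclude $\Z_4$ by connectedness: since every $e_q=2$, all six local monodromies of the cover $\Sigma\to\mathbb{P}^1=\Sigma/\Gamma$ lie in the unique order-two subgroup of $\Z_4$, so they cannot generate $\Z_4$ and the cover would be disconnected, contradicting the connectedness of $\Sigma$. Hence $\Gamma=\Z_2\times\Z_2=\{1,a,b,c\}$. Applying Riemann--Hurwitz to each involution separately shows that an involution of a genus-$3$ surface has $8$, $4$ or $0$ fixed points according as its quotient has genus $0$, $1$ or $2$; since each branch point of $\Sigma\to\mathbb P^1$ absorbs two fixed points of a single involution (the orbit over it has size $h/e_q=2$ and both points share the same stabilizer), the fixed-point numbers $F_a,F_b,F_c$ satisfy $F_a+F_b+F_c=2n=12$ with each $F_\bullet\in\{0,4,8\}$. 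I would then identify the configuration $(F_a,F_b,F_c)=(8,4,0)$, reading off that the involution with $8$ fixed points has rational quotient and is therefore the hyperelliptic involution $\psi$, that the second factor $\rho$ has $4$ fixed points, and that the product $\psi\rho$ is fixed-point free; this exhibits $\Sigma$ as hyperelliptic of genus $3$ and gives case (II).

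The main obstacle is precisely this last identification, because the numerical constraints alone also admit the symmetric distribution $(4,4,4)$, realised for instance by the diagonal $\Z_2\times\Z_2$ on a smooth plane quartic, where no factor is hyperelliptic. I would rule this out using the $\SL(2,\C)$-versus-$\PSL(2,\C)$ dichotomy of Remark \ref{P_on_quotient} together with Proposition \ref{fix-point-conncetion}: a half-dimensional brane forces the descended connection to be a genuine $\PSL(2,\C)$-connection with regular order-two local monodromies (the $\SL(2,\C)$-liftable case gives central monodromies and hence the wrong dimension), and the obstruction to lifting the equivariant $\PSL(2,\C)$-structure through $\Z_2\times\Z_2$, read off from the fixed-point data, should be compatible only with the $(8,4,0)$ configuration. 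Alternatively, one may appeal directly to the explicit classification of genus-$3$ group actions in \cite[Table 5]{B91} to single out the admissible $\Z_2\times\Z_2$-action. Verifying that this obstruction argument cleanly excludes $(4,4,4)$ is the delicate point on which the stated form of the theorem rests.
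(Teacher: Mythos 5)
Your reduction of the hypothesis to a Diophantine problem is correct, and in several places tighter than the paper's own argument: starting from Proposition \ref{only2} as the paper does, you then treat all orders $h=2^k\geq 4$ in one stroke through orbifold Riemann--Hurwitz (where the paper runs separate counts for a $\Z_4$-subgroup, inequality \eqref{9gngamma}, and for a $\Z_2\times\Z_2$-subgroup), your parity-of-$n$ observation replaces the paper's appeal to Theorem \ref{hyperkaehlersub} (divisibility of the real dimension by $8$) to exclude $g=2$, and your exclusion of $\Gamma=\Z_4$ --- order-two local monodromies generate at most the order-two subgroup, so the cover would be disconnected --- is a clean and correct substitute for the paper's dimension count. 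Moreover, your fixed-point bookkeeping is more accurate than the paper's: the involution induced by $\rho$ on $\Sigma_\psi$ fixes the images of the fixed points of $\rho$ \emph{and} of $\psi\rho$, so the correct form of \eqref{mugenus} is $g_{\rho,\psi}=\tfrac18\left(6+2g-n_\psi-n_\rho-n_{\psi\rho}\right)$; the paper's version silently assumes $n_{\psi\rho}=0$, which is why it obtains $n_\psi+n_\rho=12$ and sees only the splitting $\{8,4\}$, never the symmetric solution $(n_\psi,n_\rho,n_{\psi\rho})=(4,4,4)$ of $n_\psi+n_\rho+n_{\psi\rho}=12$ that your analysis rightly surfaces.

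This, however, is also where your proposal has a genuine gap, which you yourself flag: the $(4,4,4)$ configuration is never excluded. It cannot be dismissed by citing \cite{B91}: both $\Z_2\times\Z_2$-actions exist on genus-$3$ surfaces (the $(4,4,4)$ one is realized by sign changes on the Fermat quartic), so a classification of actions cannot single out the hyperelliptic case; one must show that the $(4,4,4)$ action supports no half-dimensional equivariant brane. Your first suggestion --- an obstruction coming from the $\SL(2,\C)$ versus $\PSL(2,\C)$ dichotomy --- is the right idea, and it can be made precise as follows. An equivariant structure in the sense of Definition \ref{hola1} on an irreducible connection is the same as a homomorphism $\hat\nu$ from the orbifold group $\Delta=\langle x_1,\dots,x_6\mid x_i^2=1,\ x_1\cdots x_6=1\rangle$ of $\CP^1$ with six $\Z_2$-points into $\GL(2,\C)$ whose restriction to $\pi_1(\Sigma)=\ker(\theta\colon\Delta\to\Gamma)$ is the $\SL(2,\C)$-holonomy of the connection. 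Then $\det\hat\nu$ is trivial on $\pi_1(\Sigma)$, hence equals $\chi\circ\theta$ for some character $\chi$ of $\Gamma=\Z_2\times\Z_2$. Half-dimensionality forces every local monodromy $\hat\nu(x_i)$ to be a non-central involution of $\GL(2,\C)$ (each central one drops the dimension by $2$), and non-central involutions have determinant $-1$, so $\chi(\theta(x_i))=-1$ for all $i$. In the $(4,4,4)$ case the elements $\theta(x_i)$ exhaust all three involutions $a$, $b$, $ab$ of $\Gamma$, and no character of $\Z_2\times\Z_2$ equals $-1$ on all three, since $\chi(ab)=\chi(a)\chi(b)$; in the $(8,4,0)$ case they take only the two values $a$, $b$, and $\chi(a)=\chi(b)=-1$ works. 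This excludes $(4,4,4)$ and completes your argument; as a bonus, the same character constraint shows that for a single involution the branch-point monodromies are all central or all non-central, which is what justifies using the dimension identity with the full number $n$ of branch points in the $h=2$ case (a point both you and the paper pass over). Without some such argument your proposal does not yet establish the theorem as stated --- and, as noted above, neither does the paper's printed proof, which never encounters the case your bookkeeping exposes.
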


\begin{proof} By  Proposition \ref{only2}, the order of $\Gamma$ must be $2^k$ for some $k\in\N.$ 
 When  $k=1 $, equality in \eqref{gngamma} implies that the number $n$ of fixed points is $0$ and one recovers the case (I).
Consider then $k\geq2$, in which case the finite group $\Gamma$ contains a subgroup of order $4$  isomorphic to $\Z_4$ or $\Z_2\times\Z_2.$ 
Indeed, let us assume that there does not exists any element of order $d>2$. Then, any two different elements $a$ and $b$ of order $2$  must commute since the order of their product
$ab$  is 2 by assumption. If there exists an element $a$ of order $d>2$, then $d=2^l$ for some $l\in\{2,..,k\}$ and a suitable power of $a$ generates a subgroup of $\Gamma$ which is isomorphic to $\Z_4.$
But if $\Z_4$ is a subgroup of $\Gamma,$ similar 
arguments as in the proof of  Proposition \ref{only2} apply. In fact, if we denote by $n_1$ the number of branch values 
of $\Sigma\mapsto\Sigma/\mathbb{Z}_4$ with
exactly one preimage and by $n_2$ the number of branch values with two preimages, we would obtain instead of
\eqref{gngamma} the inequality
\begin{equation}\label{9gngamma}
9 n_1+6 n_2+24(\gamma-1)\leq 12(\gamma-1)+4(n_1+n_2),\end{equation}
which only leaves the case $\gamma=0$. 
From Riemann-Hurwitz we get that $n_1$ must be even, and as $\gamma=0$ we have $n_1>0$ (as otherwise the Galois group would not contain an element of order 4).
Therefore, \eqref{9gngamma} shows that only $n_1=2$ and $n_2=1$ is possible, which is
excluded as it corresponds to $g=1.$
Hence, the group $\Gamma$ must contain $\Z_2\times\Z_2$ as a subgroup.

In what follows the two generators of the $\Z_2$-actions on $\Sigma$ shall be denoted by $\psi $ and $\rho.$
 Since the stabilizer subgroup of a point  in the Riemann surface $\Sigma$ is cyclic, the fixed points of $\psi$ and $\rho$ must be distinct. Consider then 
\[ \Sigma_{\psi}:=\Sigma/\psi=\Sigma/\Z_2;~{~\rm ~with~}~ g_\psi:=\text{genus of }\Sigma_{\psi}, \]
and   $n_{\psi}$   the number of fixed points of $\psi$. Then, by 
Riemann-Hurwitz,  one has that   $n_{\psi}=2g+2-4g_\psi,$
and therefore  the genus $g_{\rho,\psi}$ of the quotient  $\Sigma_\psi/\Z_2=\Sigma/(\Z_2\times\Z_2)$ is given by
\begin{equation}\label{mugenus}
g_{\rho,\psi}=\tfrac{1}{8}(6+2g-n_{\rho}-n_{\psi}),
\end{equation}
where $n_{\rho}$ is the number of fixed points of $\rho$ (acting on $\Sigma$).
Thus, as in Proposition \ref{only2},  since $g_{\rho,\psi}\geq0$ and $n_{\rho}, n_{\psi}\geq 0$, one must have
\begin{equation}\label{dim2.2}
6+2g\geq n_{\rho} + n_{\psi}\geq 6g-6.\end{equation}
From  equations \eqref{mugenus}-\eqref{dim2.2} one therefore  has that 
$2\leq g\leq 3.$ Note that by Theorem \ref{hyperkaehlersub} the real dimension of the moduli space needs to be divisible by 8, and thus the genus $g$ must be $g=3$. In this case,   equations \eqref{mugenus}-\eqref{dim2.2} imply that
$n_{\rho} + n_{\psi}=12.$ Moreover, as involutions on a surface of genus $3$ can not have 12 fixed points
by   Riemann-Hurwitz, it follows  that either the involutions have  $n_{\psi}=8$ and $n_{\rho}=4$ fixed points, or $n_{\psi}=4$ and $n_{\rho}=8$. Therefore,  $\Sigma$ must be hyperelliptic and, without  loss of generality, one may consider 
$\psi$ to be the hyperelliptic involution and $\rho$  an involution with 4 fixed points. 
Finally, $\Z_2\times\Z_2$ must be equal to $\Gamma$ since otherwise the dimension of the moduli space of equivariant $\SL(2,\C)$-connections would be strictly less than 6, which is a contradiction.
\end{proof} 

 \begin{remark}\label{Pg3}
The connections on the quotient space of Theorem  \ref{only1} (II) must have local monodromies
conjugate to $diag(1,-1)$ around any of the 6 branch values in $\CP^1$, and  thus are $\PSL(2,\C)$-connections, since 
otherwise the dimension of the corresponding moduli space would be strictly less than 6.
\end{remark}

One should note that a genus 3 curve $\Sigma$ is either hyperelliptic or it is a non-singular plane
quartic.  In the hyperelliptic case $\Sigma$ has exactly 8 Weierstrass points which
are the ramification points of the canonical map $\Sigma\rightarrow\CP^1$, and the action of $Aut(\Sigma)$ on the Weierstrass points can be found in \cite[Table 3]{Mag02}. More information about these actions can be found in \cite{babu} and references therein.  Moreover,   $\#Aut(\Sigma)\leq 168$, and thus only   2, 3 and 7 can divide the order of $Aut(\Sigma)$.

%\begin{remark}\label{dimension_count_for half}
In the case of Theorem \ref{only1} (II) the quotient $\Sigma/(\Z_2\times\Z_2)$ is the sphere with 6 marked points: a pair of points   corresponds to the 4 fixed points of $\rho$ while the remaining 4 marked points are the image of the 8 Weierstrass points of $\Sigma.$ The space of surfaces $\Sigma$ as in Theorem \ref{only1} (II) is complex 3-dimensional, and local coordinates are given by 3 
pairwise distinct points on $\CP^1\setminus\{0,1,\infty\}.$ The space of holomorphic quadratic differentials on $\Sigma$ which are invariant under $\Gamma=\Z_2\times\Z_2$
is (naturally isomorphic to) the space of meromorphic quadratic differentials on $\CP^1$ with at most simple poles at the 6 marked points, which is also complex 3-dimensional. One should note that Example \ref{example_3} fits in this case, and further study of this setting in terms of Higgs bundles shall be given in Section \ref{genus_3}. 
%\end{remark}

%%%%%%%%%%%%%%%%%%%%%%%%%%%%%%%%%%%%%%%%%%%%%%%%

 %%%%%%%%%%%%%%%%%%%%%%%%%%%%%%%%%%%%%%%%%%%%%%%%

\section{Higgs bundles and the Hitchin fibration} \label{equiv_bundles}\label{g3identy}
\label{2cov} 
%%%%%%%%%%%%%%%%%%%%%%%%%%%%%%%%%%%%%%%%%%%%%%%%

In order to understand the geometry and topology of the mid-dimensional branes constructed in Section \ref{section_construction}, these branes shall be studied through Higgs bundles. Recall that  Higgs bundles  appeared in N.~Hitchin's work as solutions of Yang-Mills self-duality equations on a Riemann surface \cite{N1}. 
Classically, a {\it Higgs bundle} on a compact Riemann surface $\Sigma$ of genus $g\geq 2$ is a pair $(E,\Phi)$ where 
 $E$ is a holomorphic vector bundle  on $\Sigma$, 
and  $\Phi$, the {\it  Higgs field}, is a holomorphic 1-form in $H^{0}(\Sigma, {\rm End}_{0}(E)\otimes K_{\Sigma}),$
 for $K_\Sigma$ the cotangent bundle of $\Sigma$ and ${\rm End}_{0}(E)$ the traceless endomorphisms of $E$.  
This definition is for $SL(n,\C)$-Higgs bundles, and one can further generalize it to define Higgs bundles for arbitrary complex groups $G_{\mathbb{C}}$. Moreover, through stability conditions, one can construct their moduli spaces $\mathcal{M}_{G_\mathbb{C}}$. 

A natural way of studying the moduli space of Higgs bundles   is through the {\it Hitchin fibration}, 
  sending the class of a Higgs bundle $(E,\Phi)$ to the coefficients of the characteristic polynomial $\det(x I -\Phi )$. The generic fibre  is an abelian variety, which can be seen as the Jacobian variety   (or subvarieties of the Jacobian) of an algebraic curve $S$, the {\it spectral curve} associated to the Higgs field \cite{N2}. For instance in the case of classical ${\rm GL}(n,\C)$-Higgs bundles, the Hitchin base is $\bigoplus_{i=1}^{n}H^0(\Sigma,K^i_\Sigma)$ and the smooth fibres can be seen through {\it spectral data} as Jacobian varieties ${\rm Jac}(S)$ of $S$. In the case of ${\rm SL}(n,\C)$-Higgs bundles the Hitchin base is $\bigoplus_{i=2}^{n}H^0(\Sigma,K^i_\Sigma)$ and the  generic fibres are given by ${\rm Prym}(S,\Sigma)\subset {\rm Jac}(S)$.

  One can understand Higgs bundles fixed by involutions by studying the induced action on the Hitchin fibration
    (see, for instance \cite{NLie} for Higgs bundles for split real forms, and more generally \cite{thesis} for any real form,  and \cite{aba,slices} for other involutions).
 In what follows  the induced action of the finite group $\Gamma$ from Theorem \ref{hyperkaehlersub} on Higgs bundles shall be considered first, in order to later describe how the $(B,B,B)$-branes from Theorem \ref{hyperkaehlersub} intersect the fibres of the Hitchin fibration. Then,  through the duality of abelian varieties in the fibres of the Hitchin fibration, we shall comment on the dual $(B,A,A)$-branes in Section \ref{duality}.
%%%%%%%%%%%%%%%%%%%%%%%%%%%%%%%%%%%%%%%%%%%%%%%%
 Since the $(B,B,B)$-branes from Theorem \ref{hyperkaehlersub} appearing as mid-dimensional spaces in the moduli space of $SL(2,\C)$-Higgs bundles correspond to Higgs bundles whose spectral curves are double covers of $\Sigma$,  some basic facts about unbranched and branched double covers of a Riemann surface $\Sigma$ shall be mentioned next.
 
%Consider a flat vector bundle $E\rightarrow \Sigma$, endowed with a smooth action of a linear group $G$, such that the projection is $G$-equivariant. The group $G$ acts naturally on the space of differentials forms $\Omega^*(\Sigma, E)$ on $\Sigma$ with values in $E$.  The vector bundle $E$ is called {\it $G$-equivariant flat vector bundle} if 
%$g\circ \nabla = \nabla  \circ g$ and 
%$ \nabla_{X_\Sigma}=\mathcal{L}^E(X)$
% for any $g \in G$ and any $X\in \mathfrak{g}$, where $\nabla$ is the covariant derivative of $E$, and
%$\mathcal{L}^E(X):\Omega^*(\Sigma, E)\rightarrow \Omega^*(\Sigma, E)$
%is the corresponding Lie algebra action and $X_{\Sigma}$  is the vector field on $\Sigma$ defined by the action of $\mathfrak{g}$.
%If the group $G$ is connected, then $g\circ \nabla = \nabla  \circ g$ follows from $\nabla_{X_\Sigma}=\mathcal{L}^E(X)$. 

% \subsection{Double covers of Riemann surfaces}

Unbranched double covers  are well-known to be parametrized by $H^1(\Sigma,\Z_2)$ (e.g.~\cite{Mir}), 2-torsion   line bundles $P_2$ on $\Sigma$.   Then, for any $\alpha\in H^1(\Sigma,\Z_2)$ there exists a unique (up to gauge equivalence) flat connection $\nabla$
such that its monodromy 
$m_\nabla\colon \pi_1(\Sigma)\to H_1(\Sigma,\Z)\to\C^*$,
which is abelian, is given by $\alpha$. 
The parallel transport along (not necessarily closed) curves $\gamma$ from $q\in \Sigma$ to $q'\in \Sigma$ shall be denoted by
$ f_{\gamma}\colon (P_2)_q\to (P_2)_{q'}.$
Fixing a point $s_0$ in the fibre $ (P_2)_{q_0}\setminus\{0\}$,  for some  $q_0\in \Sigma$,  the  double cover corresponding to $\nabla$ is  \begin{eqnarray}S^\alpha:=\{ s_q\in (P_2)_q\mid q\in \Sigma;\; \exists \gamma \text{ from } q_0 \text{ to } q \text{ with } f_{\gamma}(s_0)=s_q\}\end{eqnarray} where the covering map 
$S^\alpha  \rightarrow \Sigma$ is defined by
$ s_q\mapsto q.\nonumber$ %\begin{eqnarray}S^\alpha &\rightarrow& \Sigma\nonumber \\
% s_p&\mapsto& p.\nonumber \end{eqnarray}
Note that in particular $\alpha$ is trivial if and only if $S^\alpha$ is not connected.

Branched double covers can be constructed through holomorphic sections $s\in H^0(\Sigma,L^2)$, for $L$ a holomorphic line bundle on $\Sigma$. For simplicity, one may   restrict to sections which have simple zeros\footnote{This condition will become apparent and natural when considering the Hitchin fibration, since the smooth locus is given by sections in $H^0(\Sigma,K^2)$ with simple zeros, i.e., the locus of points in the base corresponding to smooth spectral curves.}. Then, there is a (unique) double cover 
$\pi\colon S\to \Sigma$ branched over the zeros of $s$ 
such that a square root
$t\in H^0( S,\pi^*L)$ satisfying
$t^2=s$
exists. 
The cover is then given by
\begin{eqnarray} S=\{t_q\in L_q\mid t_q^2=s_q\}.\label{hat}\end{eqnarray}
 Two double covers
$\pi\colon  S\to \Sigma$
and
$\tilde\pi\colon \tilde S\to \Sigma$ are said to
differ by the flat (holomorphic) $\Z_2$-bundle $P_2$
if they correspond to the same holomorphic section $s\in H^0(\Sigma,L^2)$,  but are obtained through  the line bundles $L$ and $P_2\otimes L$ respectively.

\section{Equivariant branes and fixed point free actions}\label{fixer}
As seen in Theorem \ref{theorem_3} (I), mid dimensional equivariant $(B,B,B)$ branes in the moduli space of Higgs bundles can be constructed through fixed point free actions on the Riemann surface $\Sigma$. Thus, in what follows,  these induced branes shall be seen inside the $SL(2,\C)$ Hitchin fibration. 
 \subsection{Fixed point free actions and equivariant Higgs bundles}\label{equiv_data2}
Let $\Sigma$ be a Riemann surface  of genus $g=2\gamma-1$ with a fixed point free involution $\tau,$ and $\Sigma_\tau:=\Sigma/\tau$ its quotient, which has genus $\gamma.$
Every $\tau$-invariant holomorphic quadratic differential $Q$ on $\Sigma$ is the pull-back of a holomorphic quadratic differential $Q_\tau$ on $\Sigma_\tau,$ and as done previously, we shall restrict to those with only  simple zeros (i.e., to generic points in the Hitchin base). Consider $S$ and $S_\tau$, the double covers of $\Sigma$ and $\Sigma_\tau$ defined by $Q$ and $Q_\tau$, respectively.
The involution $\tau$ lifts to a fixed point free involution on $S$, denoted by the same symbol, and   $S/\tau=S_\tau.$ Moreover,  the involution $\tau$ and the involution $\sigma$ switching the sheets of the double cover $S$ commute.
\begin{remark}\label{invariantsc}
Recall that for a $\tau$-equivariant $\SL(2,\C)$-connection $\nabla$, the corresponding
connection on $\Sigma_\tau$ is either a $\SL(2,\C)$-connection or a $\PSL(2,\C)$-connection.
We call connections of the first type invariant, and note that connections of the latter type  can not be realized by an  invariant
trace-free connection 1-form. Therefore, the $(B,B,B)$-brane has two (connected) components, of which  the first one shall be referred to
as the $\tau$-invariant $(B,B,B)$-brane.
\end{remark}
\begin{theorem}\label{Aabelian-variety}
Let $\tau$ be a fixed point free involution as in Theorem \ref{theorem_3}, and $q$ a generic point in the $SL(2,\C)$ Hitchin base, corresponding to a spectral curve $S$. The intersection of the $\tau$-equivariant $(B,B,B)$-brane with the generic fibre over $q$
 is the abelian variety   $$ \CPP:=\{L\in Jac(S)\mid \sigma^*L=L^*,\tau^*L=L\}.$$ This space admits an identification  with 
 \begin{eqnarray} \CPP={\rm Prym}(S_\tau,\Sigma_\tau)/\Z_2\subset {\rm Prym}(S,\Sigma),\label{sym-prym}\end{eqnarray}
where the generator of $\Z_2$ is the holomorphic line bundle $P_2$ on $S_\tau$ corresponding to the (unbranched) covering $S\to S_\tau$.
\end{theorem}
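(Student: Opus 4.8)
The plan is to establish the two descriptions separately: first, via the spectral correspondence, that the brane--fibre intersection is the $\tau^*$-fixed locus $\CPP$; and then to realise $\CPP$ as the quotient ${\rm Prym}(S_\tau,\Sigma_\tau)/\Z_2$ by analysing the pullback along the \'etale double cover $\pi_S\colon S\to S_\tau$.

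For the first description, recall that a point of the fibre over $q$ is the spectral line bundle $L$ of a Higgs bundle $(\pi_*L,\cdot)$ on $\Sigma$, and that ${\rm Prym}(S,\Sigma)=\{L\in Jac(S):\sigma^*L=L^*\}$. Since $Q$ is $\tau$-invariant, the involution $\tau$ lifts canonically to $S$ commuting with the sheet-involution $\sigma$, so $\tau^*$ preserves the fibre. I would then check that the spectral correspondence is equivariant for this lift, so that a Higgs bundle lies in the $\tau$-equivariant brane precisely when its spectral bundle satisfies $\tau^*L\cong L$; intersecting with the Prym condition yields $\CPP=\{L:\sigma^*L=L^*,\ \tau^*L=L\}=\mathrm{Fix}(\tau^*)\cap{\rm Prym}(S,\Sigma)$, whose identity component is an abelian subvariety of the expected dimension $3\gamma-3=\tfrac12\dim{\rm Prym}(S,\Sigma)$.

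For the identification \eqref{sym-prym} I would use the pullback $\pi_S^*\colon Jac(S_\tau)\to Jac(S)$. Using $\pi_S\circ\tau=\pi_S$ and $\pi_S\circ\sigma=\sigma\circ\pi_S$, a direct check shows that $\pi_S^*$ carries ${\rm Prym}(S_\tau,\Sigma_\tau)$ into $\CPP$. Since $\tau$ is free, every $L$ with $\tau^*L\cong L$ descends to $S_\tau$, the descent $M$ being well-defined up to $\otimes P_2$; this gives surjectivity once the Prym condition is seen to survive. The potential obstruction is the class $\epsilon(L):=\sigma^*M\otimes M\in\{\mathcal O,P_2\}=\Ker\pi_S^*$, which I would show is independent of the chosen descent, multiplicative in $L$, hence locally constant on $\CPP$; evaluating at $L=\mathcal O_S$ gives $\epsilon\equiv\mathcal O$ on the identity component, so $\pi_S^*$ maps ${\rm Prym}(S_\tau,\Sigma_\tau)$ onto $\CPP$. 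Finally I would compute the kernel: $\Ker(\pi_S^*\colon Jac(S_\tau)\to Jac(S))=\langle P_2\rangle\cong\Z_2$, and $P_2=\pi_{S_\tau}^*\eta$ (with $\eta$ the $2$-torsion class on $\Sigma_\tau$ defining $\Sigma\to\Sigma_\tau$) lies in ${\rm Prym}(S_\tau,\Sigma_\tau)$ because $\sigma^*P_2=P_2=P_2^*$ and $Nm(P_2)=\eta^{\otimes 2}=\mathcal O$. Hence $\pi_S^*$ induces the asserted isomorphism ${\rm Prym}(S_\tau,\Sigma_\tau)/\Z_2\xrightarrow{\ \sim\ }\CPP\subset{\rm Prym}(S,\Sigma)$, the inclusion being the forgetful one.

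The main obstacle I anticipate is the component bookkeeping in the last step: showing that the obstruction $\epsilon$ genuinely vanishes on the whole relevant locus (equivalently, that $\CPP$ is connected) and that $P_2$ sits in the correct connected component of $\Ker(Nm)$, so that the quotient is exactly $\CPP$ rather than a proper sub- or super-variety. Keeping careful track of the commuting action of $\Z_2\times\Z_2=\langle\sigma,\tau\rangle$ on $S$ and of the two possible descents of a $\tau$-invariant bundle is where the delicate part lies; the dimension count $3\gamma-3$ serves as a consistency check throughout.
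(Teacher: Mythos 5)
Your proposal is correct and follows the same skeleton as the paper's proof: identify the brane--fibre intersection with $\CPP$ via equivariance of the spectral correspondence, then study the pull-back map ${\rm Prym}(S_\tau,\Sigma_\tau)\to\CPP$ and show it is surjective with kernel $\{\mathcal O,P_2\}$. The differences lie in how the two main steps are justified. For surjectivity the paper simply observes that the two abelian varieties have the same dimension by construction (so the image, a connected abelian subvariety, fills up $\CPP$), whereas you run a descent argument along the free quotient $S\to S_\tau$ and introduce the obstruction class $\epsilon(L)=\sigma^*M\otimes M\in\{\mathcal O,P_2\}$, shown to be well defined, multiplicative and locally constant; this is more work but more transparent, since it makes explicit that both arguments only reach the identity component of $\CPP$ unless one knows $\CPP$ is connected --- a point the paper glosses over entirely and which you correctly single out as the residual gap, so your write-up is no less rigorous than the paper's on this score. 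For the kernel, the paper proves from scratch (via the unique compatible unitary flat connection and its $\pm1$ monodromies along loops whose lifts to $S$ do or do not close) exactly the standard fact you quote, namely that $\Ker\bigl(\pi_S^*\colon Jac(S_\tau)\to Jac(S)\bigr)=\{\mathcal O,P_2\}$ for an unbranched double cover; your additional verification that $P_2=\pi_{S_\tau}^*\eta$ actually lies in ${\rm Prym}(S_\tau,\Sigma_\tau)$, so that the $\Z_2$-quotient makes sense inside the Prym, is a detail the paper leaves implicit. In short: same route, with your version supplying finer bookkeeping (descent, obstruction, membership of $P_2$) at the cost of stating openly the connectivity question that the paper's dimension count silently assumes away.
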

\begin{proof}
From \cite{N1}, the $SL(2,\C)$ Hitchin base is given by $H^{0}(\Sigma, K_\Sigma^2)$, and a generic fibre of the Hitchin fibration over  $Q\in H^{0}(\Sigma, K_\Sigma^2)$ with simple zeros is given by  the   Prym variety 
\begin{eqnarray}\{L\in {\rm Jac}(S)\mid \sigma^*L\otimes L=\CO\}.\end{eqnarray}
%where $D$ is the branch divisor of $S\to \Sigma.$
 If a Higgs pair $(E,\Phi)$ is invariant with respect to the automorphism $\tau$, then 
the corresponding eigenline bundle $L$ must be symmetric, i.e., $\tau^*L=L.$
%
%Fixing a spin bundle $P$ on $\Sigma_\tau$, since the cover  $\Sigma\to \Sigma_\tau$ is unbranched, the pull-back of $P$ is a $\tau$-invariant spin bundle on $\Sigma.$
%Pulling this bundle back once more to $S$, one obtains a line bundle $\hat P$  on $S$ which satisfies
%\[\sigma^*\hat P\otimes \hat P=\hat P^2=\pi^* K_\Sigma= L(D),\]
%and  by construction $\tau^*\hat P=\hat P$. Hence, after fixing the base point $\hat P\in Pic_{2g-2}(S)$, the space of symmetric Higgs pairs is given by the space $\CPP$ in \eqref{sym-prym}.
%
In order to show that    one can identify $\CPP$,    a subvariety of ${\rm Jac}(S)$, with the quotient ${\rm Prym}(S_\tau,\Sigma_\tau)/\Z_2$, 
 note that via pull-back one has a surjective map
\begin{eqnarray}{\rm Prym}(S_\tau,\Sigma_\tau)\to \CPP\subset {\rm Prym}(S,\Sigma).\label{map1}\end{eqnarray}
Surjectivity of the above map can be seen from looking at the dimensions of the corresponding abelian varieties, which are the same by construction.
It remains to compute the kernel of the map \eqref{map1}. Let $L$ be a holomorphic line bundle of degree 0 on $S_\tau$ which pulls back to the trivial holomorphic line bundle on
$S,$ and equip the line bundle with its unique compatible unitary flat connection $\nabla$.
On $S,$ this flat connection is trivial. Thus $\nabla$ has monodromy $\pm 1$ along closed curves on $S_\tau,$ and $-1$ is only possible when a lift of a loop
to $S$ does not close. Therefore, the line bundle $L$ is either the trivial bundle, or the bundle $P_2$ corresponding to the covering $S\to S_\tau$.
\end{proof} 

 \subsection{Fixed point free actions and anti-equivariant Higgs bundles}\label{equiv_data2b}
Let $\Sigma$ be a 
   Riemann surface  of odd genus $g=2\gamma-1$ with a fixed point free involution $\tau$ as in Theorem \ref{theorem_3}.
Following  Section \ref{CMC_2}, {\it anti-equivariant Higgs bundles $(\bar\partial,\Phi)$} with respect to the involution $\tau$ are given by a $\tau$-equivariant holomorphic structure $\bar\partial$ 
and a Higgs field $\Phi$
which satisfies \[\tau^*\Phi=-g^{-1}\circ \Phi\circ g,\]
where $g$ is the gauge transformation such that $\tau^*\bar\partial=\bar\partial.g$
and $(g\circ\tau) g=\id.$ 
In this situation,   the following analog of Theorem \ref{hyperkaehlersub} can be proved:
\begin{proposition}\label{anti-equivariant-BAA}
The connected components of the moduli space of stable anti-equivariant Higgs bundles are 
complex submanifolds of the moduli space of Higgs bundles with respect to $I$ and Lagrangian with respect to the symplectic forms corresponding to $J$ and $K$.
\end{proposition}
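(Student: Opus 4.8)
The plan is to exhibit the moduli space of stable anti-equivariant Higgs bundles as the fixed-point set of a single involution on the smooth locus of the moduli space, and then to read off its type from the way this involution interacts with $I$, $J$ and $K$. Concretely, I would consider the involution
\[\Theta\colon (\bar\partial,\Phi)\longmapsto (\tau^*\bar\partial,-\tau^*\Phi)\]
on gauge equivalence classes. A class is fixed by $\Theta$ exactly when there is a gauge transformation $g$ with $\tau^*\bar\partial=\bar\partial\cdot g$ and $-g\circ(\tau^*\Phi)\circ g^{-1}=\Phi$, that is, precisely when $\tau^*\Phi=-g^{-1}\circ\Phi\circ g$; the identity $(g\circ\tau)g=\id$ is what one obtains by pulling $\tau^*\bar\partial=\bar\partial\cdot g$ back once more by $\tau$, and it guarantees that $\Theta$ is an honest involution on the moduli space. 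Thus the fixed-point set of $\Theta$ is exactly the moduli space of stable anti-equivariant Higgs bundles, and it remains to determine its brane type.

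First I would compute the differential. On a tangent vector $(\psi_1,\psi_2)$, with $\psi_1\in\Omega^{0,1}({\rm End}_0 E)$ and $\psi_2\in\Omega^{1,0}({\rm End}_0 E)$, the involution acts by $d\Theta(\psi_1,\psi_2)=(\tau^*\psi_1,-\tau^*\psi_2)$, where the pullback on the endomorphism part uses the equivariant structure furnished by $g$. Since $\tau$ is holomorphic, $\tau^*$ preserves the splitting into $(0,1)$- and $(1,0)$-parts and commutes with multiplication by $i$, so $d\Theta\circ I=I\circ d\Theta$ and $\Theta$ is holomorphic for $I$; hence its fixed locus is a complex submanifold in the structure $I$, a $B$-brane. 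For $J$ one uses that it swaps the two factors up to the adjoint $*$ and a factor of $i$, schematically $J(\psi_1,\psi_2)=(i\psi_2^*,i\psi_1^*)$. Here the decisive point, exactly as in the proof of Theorem \ref{hyperkaehlersub}, is that the background hermitian metric $h$ may be chosen $\tau$-invariant and that then, by uniqueness of solutions of the self-duality equations for stable pairs, the intertwining gauge $g$ must be unitary; this makes $\tau^*$ commute with $*$ (the Hodge star on a surface depends only on the conformal class, which $\tau$ preserves, while the endomorphism adjoint is preserved because $g$ is unitary). Combining this with the extra sign on the Higgs component yields $d\Theta\circ J=-J\circ d\Theta$, and therefore $d\Theta\circ K=-K\circ d\Theta$ since $K=IJ$.

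Finally I would invoke the standard consequences for fixed loci of isometric involutions. Unitarity of $g$ makes $\Theta$ preserve the hyper-K\"ahler metric, so $\Theta$ is an isometric involution whose fixed set is a smooth totally geodesic submanifold on the stable locus. An isometry anti-commuting with $J$ satisfies $\Theta^*\omega_J=-\omega_J$, so $\omega_J$ restricts to zero on the fixed set; since an anti-holomorphic involution for $J$ cuts out a real form of half the real dimension, this isotropic submanifold is in fact Lagrangian for $\omega_J$, and the identical argument gives the Lagrangian property for $\omega_K$. Together with the $I$-holomorphicity, this identifies the connected components of the moduli space of stable anti-equivariant Higgs bundles as $(B,A,A)$-branes.

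The step I expect to be the main obstacle is the analytic input that the gauge $g$ realizing anti-equivariance is forced to be unitary; this is the anti-equivariant analogue of the uniqueness argument used in Theorem \ref{hyperkaehlersub}, and it simultaneously underlies the isometry property, the commutation of $\tau^*$ with $*$, and hence the anti-holomorphicity in $J$ and $K$. A secondary point requiring care is the sign bookkeeping showing that negating $\Phi$ --- as opposed to replacing $\Phi$ by $\Phi^*$, which in Section \ref{aba} produces an $(A,B,A)$-brane --- is exactly what converts $I$-holomorphicity into anti-holomorphicity for $J$ and $K$.
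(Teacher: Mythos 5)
Your argument is correct in substance, but it proceeds by a genuinely different route than the paper. The paper works over the locus of regularly stable bundles, where the moduli space is the cotangent bundle of the moduli space of stable bundles and the complex structure $I$ decouples: $I$-holomorphicity follows because equivariant stable bundles form a complex submanifold of the base and anti-equivariant Higgs fields form a complex subspace of each fibre; isotropy of $\omega_J$ and $\omega_K$ follows by applying the transformation formula for $\tau$ to the integral defining the holomorphic symplectic form (the sign flip on the Higgs component forces the form to equal its own negative); and the upgrade from isotropic to Lagrangian is done by an explicit dimension count, using that the equivariant locus is hyper-K\"ahler (Theorem \ref{hyperkaehlersub}) to show equivariant bundles and anti-equivariant Higgs fields each contribute one quarter of the full dimension. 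You instead realize the anti-equivariant locus as the fixed-point set of the involution $\Theta\colon(\bar\partial,\Phi)\mapsto(\tau^*\bar\partial,-\tau^*\Phi)$, show $\Theta$ is $I$-holomorphic, $J$- and $K$-anti-holomorphic, and isometric, and then invoke the standard facts that fixed loci of isometries are totally geodesic and that anti-holomorphic involutions have exactly half-dimensional fixed sets (since $d\Theta$ anti-commuting with $J$ forces $J$ to interchange the $\pm1$ eigenspaces). What your approach buys: the Lagrangian property comes for free from linear algebra, with no dimension count and no appeal to Theorem \ref{hyperkaehlersub}; it also gives smoothness and total geodesy of the components in one stroke. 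What it costs: you must re-run the harmonic-metric uniqueness argument in the anti-equivariant setting to get unitarity of the intertwining gauge $g$ (needed for $\Theta$ to be an isometry anti-commuting with $J$), whereas the paper's proof of this proposition stays in the holomorphic category and only uses that input indirectly, through the earlier theorem. Two details in your write-up deserve care but do not affect the conclusion: first, pulling back $\tau^*\bar\partial=\bar\partial\cdot g$ by $\tau$ only gives $g\cdot(g\circ\tau)\in\mathrm{Stab}(\bar\partial)$, i.e.\ a scalar for a stable bundle, so the normalization $(g\circ\tau)g=\id$ requires a rescaling of $g$ (and for $\SL(n,\C)$ this may only be achievable up to a central element, which governs which component one lands in, not the brane type); second, the identification of the fixed locus of $\Theta$ with the anti-equivariant moduli space needs the observation that a complex-gauge intertwiner can be replaced by a unitary one via uniqueness of the harmonic metric, which is exactly the analytic step you flagged as the main obstacle.
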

\begin{proof}
Over the locus of regularly stable bundles, the full moduli space is the cotangent bundle and the complex structure decouples. The moduli space of 
equivariant stable bundles is a complex submanifold of the moduli space of stable bundles, and anti-equivariant Higgs fields for a $\tau-$equivariant 
bundle are a complex subvector space of the space of Higgs fields. Hence, gauge classes of anti-equivariant Higgs bundles give rise to a complex submanifold.
To see that the holomorphic symplectic form vanishes on this submanifold, we just apply the transformation formula for
the diffeomorphism $\tau$ to the integral defining the holomorphic symplectic structure.

The dimension of a connected component of the moduli space of stable anti-equivariant Higgs bundles is  half of the dimension of the full moduli space:
because the moduli space of equivariant Higgs bundles is a hyper-K\"ahler submanifold at its smooth points, the moduli space of equivariant Higgs bundles with zero Higgs field has at its smooth points $\tfrac{1}{4}=(\tfrac{1}{2})^2$ the dimension of the full moduli space. Moreover,  for a given stable equivariant Higgs pair $(\bar\partial,0)$ with zero Higgs field, the dimension of the vector space of equivariant Higgs fields with respect to $\bar\partial$ also equals to $\tfrac{1}{4}$ the dimension of the full moduli space and must therefore coincide with the dimension of the vector space
of anti-equivariant Higgs fields with respect to $\bar\partial.$ Hence the dimension of the tangent space at the smooth point $(\bar\partial,0)$ of the moduli space of anti-equivariant Higgs bundles sums up to $\tfrac{1}{2}$ of the dimension of the full moduli space.
\end{proof}

Note that for $\Phi$ a Higgs field with spectral line bundle $L$, the Higgs field $-\Phi$ has spectral line bundle $\sigma^*L$. Thus, following similar steps as in the proof of Theorem \ref{Aabelian-variety},
 one can see that an anti-equivariant Higgs pair is determined (after choosing a square root of its determinant and after fixing a $\tau$-invariant base point $P_2$ in the   Prym variety)
by a point in the space
\begin{equation}\label{non-sym-prym}
\CPP^{\vee}:=\{L\in Jac(S)\mid \sigma^*L=L^*,\tau^*L=L^*\}\subset {\rm Prym}(S,\Sigma).
\end{equation}

\begin{remark}
When considering   involutions on the spectral data associated to Higgs bundles for the group $SL(2,\C)$, from \cite[Theorem 4.12]{thesis} one has that line bundles in ${\rm Prym}(S,\Sigma)$ of order two induce certain real Higgs bundles, namely $SL(2,\R)$-Higgs bundles. These are the line bundles which are both invariant and anti-invariant with respect to the involution defining the double  cover $S\rightarrow \Sigma$, and  the monodromy of the fibration has an explicit description in terms of spectral data   \cite{monodromy}.
\end{remark}

As seen before, the natural involution $\sigma$ of the spectral curve $S$ and the fixed point free involution $\tau$ commute on $S$. Hence,  $\tau\circ\sigma$ is an involution which  is fixed point free and $\sigma$ descends to an involution of the quotient Riemann surface $\tilde{S}:= S/(\tau\circ \sigma)$. Moreover, the quotient Riemann surface $\tilde  S/\sigma$ becomes $\Sigma_\tau$ in a natural way
yielding the following diagram:%, where $S_\tau=S/\tau$ and $ \Sigma_\tau =(S_\tau)/\sigma$:
\begin{equation}\label{diagMs}
\xymatrix{
&  S \ar[ld]_{\mod\sigma}\ar[d]  \ar[rd]^{\mod \tau\circ\sigma}  &\\
%& & & &\\
\Sigma\ar[rd]_{\mod\tau}   &S_\tau:=S/\tau \ar[d]   &  \tilde S\ar[ld]^{\mod\sigma} &\\
%& & & &\\
 & \Sigma_\tau =(S_\tau)/\sigma & \\
}
\end{equation}

\begin{proposition}\label{Babelian-variety}
The abelian variety $\CPP^{\vee}$ in \eqref{non-sym-prym} is given  by $$\CPP^{\vee}={\rm Prym}({\tilde S},\Sigma_\tau)/\Z_2,$$
where the generator of $\Z_2$ is the holomorphic 2-torsion line bundle $\tilde P_2$ on ${\tilde S}$ defining the cover $S\to   {\tilde S}$. 
\end{proposition}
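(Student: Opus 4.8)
The plan is to mirror the proof of Theorem~\ref{Aabelian-variety}, replacing the covering $S\to S_\tau$ by the unbranched double cover $p\colon S\to\tilde S$ obtained by quotienting out the fixed point free involution $\tau\circ\sigma$ of diagram~\eqref{diagMs}. Concretely, I would construct the pull-back homomorphism of abelian varieties
\[
p^*\colon {\rm Prym}(\tilde S,\Sigma_\tau)\longrightarrow \CPP^{\vee}\subset{\rm Prym}(S,\Sigma),
\]
prove it is surjective by a dimension count, and identify its kernel with the order-two subgroup $\{\CO,\tilde P_2\}$; the isomorphism theorem for complex tori then yields $\CPP^{\vee}={\rm Prym}(\tilde S,\Sigma_\tau)/\Z_2$, as claimed.

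To see that $p^*$ is well defined into $\CPP^{\vee}$, I would first record the covering relations read off from diagram~\eqref{diagMs}. Since $\tau$ and $\sigma$ commute and $\tau\circ\sigma$ is the deck transformation of $p$, the induced involution on $\tilde S$ (still written $\sigma$) satisfies $p\circ\sigma=\sigma\circ p$, and because $\tau=(\tau\circ\sigma)\circ\sigma$ one also obtains $p\circ\tau=p\circ\sigma=\sigma\circ p$. Hence for any $M$ with $\sigma^*M=M^*$ on $\tilde S$,
\[
\sigma^*(p^*M)=p^*(\sigma^*M)=(p^*M)^*,\qquad \tau^*(p^*M)=p^*(\sigma^*M)=(p^*M)^*,
\]
so $p^*M$ meets both defining conditions of $\CPP^{\vee}$ in \eqref{non-sym-prym}. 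For the kernel, $p$ is the unbranched double cover defined by the $2$-torsion bundle $\tilde P_2\in Jac(\tilde S)$, whence $\ker\!\big(p^*\colon Jac(\tilde S)\to Jac(S)\big)=\{\CO,\tilde P_2\}$. That $\tilde P_2$ actually lies in ${\rm Prym}(\tilde S,\Sigma_\tau)$ follows from $\sigma^*\circ p^*=p^*\circ\sigma^*$: since $p^*(\sigma^*\tilde P_2)=\sigma^*(p^*\tilde P_2)=\CO$, the involution $\sigma^*$ preserves $\ker p^*$, and fixing $\CO$ it must fix $\tilde P_2$; as $\tilde P_2$ is $2$-torsion this is precisely $\sigma^*\tilde P_2=\tilde P_2^*$.

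It remains to establish surjectivity, which I would do by comparing dimensions exactly as in Theorem~\ref{Aabelian-variety}. By Riemann--Hurwitz the fixed point free involution $\tau\circ\sigma$ gives $g_{\tilde S}=(g_S+1)/2=4\gamma-3$, and $\sigma$ descends to an involution of $\tilde S$ with $2g-2=4\gamma-4$ fixed points (the images of the $\sigma$-fixed points of $S$, which pair up freely under $\tau$), so that $\dim {\rm Prym}(\tilde S,\Sigma_\tau)=g_{\tilde S}-\gamma=3\gamma-3$. On the other hand, by Proposition~\ref{anti-equivariant-BAA} the anti-equivariant locus $\CPP^{\vee}$ has dimension $\tfrac12\dim{\rm Prym}(S,\Sigma)=3\gamma-3$ as well. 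Since $p^*$ is a homomorphism of abelian varieties with finite kernel between tori of equal dimension, its image is all of $\CPP^{\vee}$, and the identification $\CPP^{\vee}={\rm Prym}(\tilde S,\Sigma_\tau)/\langle\tilde P_2\rangle$ follows. The main obstacle is this surjectivity/dimension bookkeeping together with the careful tracking of the covering relations among $\sigma$, $\tau$ and $p$ through diagram~\eqref{diagMs}; everything else reduces to the two routine steps (well-definedness and kernel) already used in the proof of Theorem~\ref{Aabelian-variety}.
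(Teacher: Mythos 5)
Your proposal is correct and follows essentially the same route as the paper: the paper's proof simply observes that, since $\sigma$ and $\tau$ commute, $\CPP^{\vee}=\{L\in {\rm Jac}(S)\mid \sigma^*L=L^*,\ (\sigma\circ\tau)^*L=L\}$ and then declares the rest ``analogous to Theorem \ref{Aabelian-variety}'', which is precisely the argument you carry out explicitly via the unbranched cover $p\colon S\to\tilde S=S/(\tau\circ\sigma)$ (pull-back map, kernel $\{\CO,\tilde P_2\}$, surjectivity by equality of dimensions). Your write-up is in fact more detailed than the paper's, e.g.\ in verifying $\tilde P_2\in{\rm Prym}(\tilde S,\Sigma_\tau)$ and in the genus bookkeeping, but the underlying idea is identical.
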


\begin{proof} 
 Let  $\tilde P$ be a fixed base point in the affine Prym which is in $\CPP$.
% satisfying
%$\sigma^*\tilde P=\tilde P$, $(\sigma\circ\tau)^*\tilde P=\tilde P$ and $\tilde P\otimes \tilde P=L(D)$.
Since $\sigma$ and $\tau$ commute,  then $
\CPP^{\vee}=\{L\in Jac(S)\mid \sigma^*L=L^*,(\sigma\circ\tau)^*L=L\}\subset {\rm Prym}(S,\Sigma),$
so the proposition follows  analogously to the proof of Theorem \ref{Aabelian-variety}.
\end{proof}

 \section{Hyperelliptic surfaces  of genus $3$}\label{equiv_data}\label{genus_3}
As seen in Theorem \ref{only1}, equivariant branes on Riemann surfaces $\Sigma$ of genus $3$ occur either with a fixed point free involution $\tau$ (as studied in Section \ref{fixer}), or with a hyperelliptic involution $\psi$ and a second involution $\rho$ which has $4$ fixed points. In what follows  both settings for compact Riemann surfaces of genus $3$ shall be considered.

\subsection{Equivariant branes through $\psi$ and $\rho$} In order to study the latter case appearing in Theorem \ref{only1} (II), consider    a compact Riemann surface $\Sigma$ of genus 3 and a  holomorphic quadratic differential $Q$ with simple zeros which is invariant under the involutions $\psi$  and  $\rho$ giving  a finite group action of $\Gamma$ as in Theorem \ref{only1} (II). 
In what follows, we shall study the intersection $\CPP$ of the mid-dimensional $(B,B,B)$ brane of equivariant Higgs bundles in Theorem  \ref{only1}  with the generic fibres of the $SL(2,\C)$ Hitchin fibration over $Q$,  where $Q$ is a generic (having simple zeros) element of $  H^{0}(\Sigma, K^2_\Sigma)$. 
 The double cover $ S$ defined in \eqref{hat} giving the spectral curve of Higgs fields $\Phi$ for which ${\rm det}(\Phi)=Q$ has genus $9$ and is defined as 
\begin{eqnarray}\pi\colon  S=\{\omega_p\in K_p\mid p\in\Sigma;\; -\omega_p^2=Q_p\}\to\Sigma.\end{eqnarray}
The cover inherits from ${\rm Tot}(K)$ a natural involution $\sigma:S\to S$  
with $\pi\circ\sigma=\pi,$ and the involutions $\psi$ and $\rho$ lift to commuting involutions (denoted by the same symbols) on $S$.
Note that neither of the involutions $\psi$ and $\rho$ on $  S$ has   fixed points since the points over the fixed points on $\Sigma$ are interchanged. As in the case of fixed point free involutions,  one may consider the quotient \begin{equation}\label{new-g3}
\tilde S=  S/(\Z_2\times\Z_2),\end{equation}
which is now a hyperelliptic surface of genus $3$, not necessarily the same as $\Sigma$. Its hyperelliptic involution is given by the induced action of $\sigma$, and
it branches over the   8 points on $\CP^1:$
six of them are the marked points of the   sphere as in Section \ref{CMC_2}, while the other two branch points are given by the two zeros
of the meromorphic quadratic differential on $\CP^1$ which pulls back to $Q.$

 As mentioned previously,  the intersection  $\CPP\subset {\rm Prym}(  S,\Sigma)$ of the mid-\linebreak dimensional $(B,B,B)$ brane of equivariant Higgs bundles with a generic fibre ${\rm Prym}(S,\Sigma)$ of the $SL(2,\C)$ Hitchin fibration parametrizes the ($\psi$- and $\rho$-) equivariant Higgs fields with determinant $Q$. In order to give a geometric description of this variety,  note the following:

\begin{proposition}\label{Afix}
The pullback (along $\tilde{S}\to S$) determines an immersion of ${\rm Jac}(\tilde S)$ into ${\rm Prym}(S,\Sigma)$ with finite kernel.
\end{proposition}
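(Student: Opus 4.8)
The plan is to realize the asserted map concretely as the pullback homomorphism $f^*\colon{\rm Jac}(\tilde S)\to{\rm Jac}(S)$ attached to the degree-$4$ quotient covering $f\colon S\to\tilde S$ by the group $\langle\psi,\rho\rangle\cong\Z_2\times\Z_2$ (this is the covering whose pullback the statement refers to), and then to check two things separately: that its image is contained in ${\rm Prym}(S,\Sigma)$, and that it is an immersion with finite kernel. Since $f$ is a finite, non-constant morphism of smooth projective curves, $f^*$ is automatically a homomorphism of abelian varieties, so by homogeneity it will suffice to analyze its behaviour at the origin.

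First I would verify the containment of the image in ${\rm Prym}(S,\Sigma)=\{L\in{\rm Jac}(S)\mid\sigma^*L=L^*\}$. Because $\sigma$ commutes with both $\psi$ and $\rho$, it descends to an involution $\bar\sigma$ of $\tilde S$ satisfying $f\circ\sigma=\bar\sigma\circ f$, and by construction $\bar\sigma$ is the hyperelliptic involution of $\tilde S$ recorded when $\tilde S$ was defined. A hyperelliptic involution acts as $-1$ on the Jacobian, its holomorphic differentials being anti-invariant, so $\bar\sigma^*M\cong M^*$ for every $M\in{\rm Jac}(\tilde S)$. Combining these, $\sigma^*(f^*M)=f^*(\bar\sigma^*M)=f^*(M^*)=(f^*M)^*$, and since $\deg f^*M=0$ this gives $f^*M\in{\rm Prym}(S,\Sigma)$.

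To see that $f^*$ is an immersion with finite kernel I would pass to the differential at the origin. Under the standard identification of the tangent space of ${\rm Jac}(\tilde S)$ at $0$ with $H^1(\tilde S,\CO_{\tilde S})\cong\overline{H^0(\tilde S,K_{\tilde S})}$, the differential $df^*$ is the pullback map on holomorphic $1$-forms. This pullback $H^0(\tilde S,K_{\tilde S})\to H^0(S,K_S)$ is injective, since $f$ is non-constant and a nonzero form cannot restrict to zero on the dense unramified locus; hence $df^*$ is injective. An injective differential at the origin of a homomorphism of complex tori forces $\ker f^*$ to be discrete, hence finite by compactness, and simultaneously makes $f^*$ an immersion. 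Equivalently, the norm relation $f_*\circ f^*=[\deg f]=[4]$ already confines $\ker f^*$ to the $4$-torsion ${\rm Jac}(\tilde S)[4]$, which is finite.

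I do not expect a serious obstacle: the only two points requiring care are the identification of $\bar\sigma$ with the hyperelliptic involution of $\tilde S$—which is exactly the structure already built into the definition of $\tilde S$—and the injectivity of pullback on differentials, which is the standard mechanism producing finite kernels for covers of curves and is insensitive to whether $\psi\rho$ acts with fixed points on $S$. A dimension check is reassuring: $\dim{\rm Jac}(\tilde S)=3$ while $\dim{\rm Prym}(S,\Sigma)=g_S-g_\Sigma=9-3=6$, so $f^*$ embeds ${\rm Jac}(\tilde S)$ as a half-dimensional subvariety, consistent with the mid-dimensional nature of the brane $\CPP$ it is meant to describe.
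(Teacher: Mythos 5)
Your proof is correct and resolves both halves of the statement, but the key step is argued differently from the paper. For containment of the image in ${\rm Prym}(S,\Sigma)$, both proofs rest on the same underlying fact --- $\sigma$ descends to the hyperelliptic involution of $\tilde S$, which acts as $-1$ on ${\rm Jac}(\tilde S)$ --- you via anti-invariance of holomorphic differentials, the paper by noting that $L\otimes\sigma^*L$ is $\sigma$-invariant and hence pulled back from $\CP^1=\tilde S/\sigma$, so trivial for degree reasons; these are essentially the same mechanism. The genuine divergence is the finite-kernel step: the paper endows a kernel element with its compatible flat unitary connection and analyzes its monodromy through the $\Z_2\times\Z_2$-cover $S\to\tilde S$, whose covering homomorphism $p\colon H_1(\tilde S,\Z)\to\mathcal{S}_4$ is abelian of exponent two, concluding that kernel elements are precisely those whose monodromy is $\pm1$ as dictated by $p$ --- a finite set, in effect the character group of the deck group. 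You instead prove injectivity of the differential at the origin (pullback on $H^{0,1}$, conjugate to pullback on holomorphic forms), which yields discreteness and hence finiteness, with the norm relation ${\rm Nm}\circ f^*=[4]$ as an alternative bound by $4$-torsion. Your route is more standard and more general (it works for any finite cover, ramified or not, as you note), and it explicitly establishes the immersion claim, which the paper leaves implicit (in characteristic zero a homomorphism of abelian varieties with finite kernel automatically has injective differential, but the paper never says this); the paper's route buys a precise identification of the kernel, which is the kind of information used later, e.g.\ in Theorem \ref{ultimo}. One last remark: the paper's opening sentence about an invariant line bundle $M$ with $M^2=K^*_\Sigma$ (the pullback of $\CO(-1)$ from $\CP^1$) serves to identify the Hitchin fibre, a torsor, with the Prym group compatibly with the $\Gamma$-action; it concerns the application to the brane $\CPP$ rather than the literal statement, so omitting it costs you nothing here.
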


%%%%%%%%%%%%%%%

\begin{proof} 
The last part of the proposition is clear once one has the existence of a line bundle $M\to  S$ satisfying $M^2=K^*_\Sigma$ and which is invariant under $\psi$ and 
$\rho.$ Such a bundle needs to be the pull-back of $\mathcal O(-1)\to\CP^1$ by the fourfold covering $  S\to\CP^1$ given by factoring out $\sigma$ and $\psi.$
Given $L\in {\rm Jac}(\tilde S)$, one has that $L\otimes\sigma^*L$ is invariant under $\sigma$, and hence it is the pull-back of 
a holomorphic line bundle on $\CP^1=  \tilde S /\sigma.$ Moreover, since the cover is obtained through the abelian group $\Z_2\times \Z_2,$ its monodromy 
\begin{eqnarray}p\colon H_1(  \tilde S,\Z)\to \mathcal S_4\end{eqnarray}
is abelian, where   $\mathcal{S}_4$ is the 4-symmetric group. Furthermore, the action is of order two: for each $\gamma\in H_1( \tilde S,\Z)$ the composition is 
$p(\gamma)\circ p(\gamma)=\Id.$
In order to see when the pull-back of a holomorphic line bundle $L\in {\rm Jac}(\tilde S)$ becomes trivial  on $  S$, equip $L$ with its unique compatible unitary flat connection $\nabla^L.$ The bundle $L$ is in the kernel of the pull-back map if and only if the monodromy representation of 
the pull-back of $\nabla^L$ on $  S$ is trivial. This happens if and only if 
the monodromy of $\nabla^L$ along a closed curve $\gamma$ is $1$ when $p(\gamma)=\Id$, and $\pm1$ if $p(\gamma)\neq\Id$, and the set of such line bundles  is   finite.
\end{proof}

Following the notation from previous sections,   consider the two quotients  $\Sigma_\rho:=\Sigma/\rho$ and $S_\rho:=S/\rho$. Then, one has the following:
\begin{proposition}\label{tautwisted}
The branched  covers $ S/(\rho\circ\sigma)\to\Sigma_\rho$ and $S_\rho\to\Sigma_\rho$ differ by the $\Z_2$-bundle determining the unbranched cover $\Sigma\to\Sigma_\rho.$
\end{proposition}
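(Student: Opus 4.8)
The plan is to recognise the three covers in the statement as the three intermediate double covers of a single Galois cover. Since $\rho$ and $\sigma$ commute on $S$ and are involutions, the group $G:=\langle\rho,\sigma\rangle\cong\Z_2\times\Z_2$ acts on $S$ with quotient $S/G=(S/\sigma)/\rho=\Sigma/\rho=\Sigma_\rho$. Its three subgroups of order two, namely $\langle\sigma\rangle$, $\langle\rho\rangle$ and $\langle\rho\sigma\rangle$, produce exactly the three double covers of $\Sigma_\rho$ appearing in the problem: $S/\langle\sigma\rangle=\Sigma$, $S/\langle\rho\rangle=S_\rho$ and $S/\langle\rho\sigma\rangle$. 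Because $Q$ is generic the spectral curve $S$ is connected, so the monodromy $\pi_1(\Sigma_\rho\setminus B)\to G$ of the cover $S\to\Sigma_\rho$ is surjective, where $B\subset\Sigma_\rho$ is the branch locus.

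First I would pin down the branch data, since this is what makes the comparison geometric. The involution $\sigma$ fixes precisely the ramification points of $\pi\colon S\to\Sigma$, i.e. the points over the zeros of $Q$; the involution $\rho$ is fixed point free on $S$ (as recorded before the proposition); and at a point $p_0$ of $\Sigma$ fixed by $\rho$ the differential $d\rho_{p_0}=-\Id$ forces the induced action on $K_{p_0}$ to be $-\Id$, so $\rho$ sends a square root of $-Q(p_0)$ to its negative and thus interchanges the two points of $S$ over $p_0$; consequently $\rho\sigma$ fixes both points of $S$ above each of the four fixed points of $\rho$. Writing $D_A\subset\Sigma_\rho$ for the image of the zeros of $Q$ and $D_B$ for the image of the fixed points of $\rho$, the inertia group is $\langle\sigma\rangle$ along $D_A$ and $\langle\rho\sigma\rangle$ along $D_B$. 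Using that $S/H\to\Sigma_\rho$ ramifies over a branch point exactly when its inertia group is not $H$, I obtain that $\Sigma\to\Sigma_\rho$ is branched over $D_B$, that $S/(\rho\sigma)\to\Sigma_\rho$ is branched over $D_A$, and that $S_\rho\to\Sigma_\rho$ is branched over $D_A+D_B$.

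The relation is then a purely group theoretic fact about $G=\Z_2\times\Z_2$: its three nontrivial characters, with kernels $\langle\sigma\rangle,\langle\rho\rangle,\langle\rho\sigma\rangle$, satisfy that the product of any two equals the third, and in particular $\chi_{\langle\rho\rangle}\cdot\chi_{\langle\rho\sigma\rangle}=\chi_{\langle\sigma\rangle}$. Composing with the surjection $\pi_1(\Sigma_\rho\setminus B)\to G$, the $\Z_2$-torsor (monodromy representation) defining $S_\rho\to\Sigma_\rho$ is the product of the one defining $S/(\rho\sigma)\to\Sigma_\rho$ with the one defining $\Sigma\to\Sigma_\rho$; equivalently the defining line bundles satisfy $L_{S_\rho}^2=\CO(D_A+D_B)=L_{S/(\rho\sigma)}^2\otimes L_\Sigma^2$ compatibly with the cutting-out sections, which is exactly the assertion that the two covers differ by the $\Z_2$-bundle determining $\Sigma\to\Sigma_\rho$. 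The one point that needs care, and which I expect to be the main obstacle, is that $\Sigma\to\Sigma_\rho$ is itself branched (over $D_B$) and that the two covers being compared have different branch divisors ($D_A+D_B$ versus $D_A$); so the difference must be read at the level of $\Z_2$-torsors on $\Sigma_\rho\setminus B$ rather than as a genuine flat line bundle on all of $\Sigma_\rho$. Once one passes to monodromy as above, the displayed character identity finishes the proof.
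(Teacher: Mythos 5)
Your route is genuinely different from the paper's, and the group theory in it is sound: you regard $S\to\Sigma_\rho$ as a Galois $\Z_2\times\Z_2$-cover, identify the three intermediate quotients $\Sigma=S/\sigma$, $S_\rho=S/\rho$, $S/(\rho\sigma)$ with the three index-two subgroups, compute the inertia along the two types of branch values, and conclude via the character identity $\chi_{\langle\rho\rangle}=\chi_{\langle\rho\sigma\rangle}\cdot\chi_{\langle\sigma\rangle}$ that the monodromy of $S_\rho$ is the product of the monodromies of $S/(\rho\sigma)$ and of $\Sigma$. The paper argues instead by an explicit model: $\Sigma$ is written as pairs $(q,s_q)$ with $s_q$ a parallel-transport value in the flat bundle $P_2\to\Sigma_\rho$, and the spectral curve as triples $(q,s_q,\omega_q)$ with $-\omega_q^2=(Q_\rho)_q$; then $S_\rho$ is visibly the square-root cover of $-Q_\rho$ inside $K_{\Sigma_\rho}$, while the tensor product $(q,s_q\otimes\omega_q)$, well defined precisely on $S/(\rho\circ\sigma)$, exhibits that quotient as the square-root cover of the same differential inside $P_2\otimes K_{\Sigma_\rho}$ --- which is word-for-word the definition of ``differing by $P_2$'' from Section \ref{2cov}. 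Your argument avoids the explicit model; the paper's lands directly on the holomorphic statement without any torsor-to-line-bundle translation.

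The real issue is the hypothesis under which you worked, and it is exactly the point you flagged as the main obstacle. You took $\rho$ to be the involution with four fixed points on $\Sigma$, so that $\Sigma\to\Sigma_\rho$ is branched over $D_B$ and the two covers in question have different branch divisors; you then --- correctly --- had to weaken the conclusion to an identity of $\Z_2$-torsors on $\Sigma_\rho\setminus B$, since with the paper's definition (same section, bundles $L$ and $P_2\otimes L$) the statement is false when the branch divisors disagree. But that weakened statement is not the proposition the paper proves or uses. The phrase ``the unbranched cover $\Sigma\to\Sigma_\rho$'' in the statement, the proof itself (which uses that $P_2$ is flat and that the pull-back of $K_{\Sigma_\rho}$ to $\Sigma$ is $K_\Sigma$, both false for a branched quotient), and the application in the proof of Theorem \ref{ultimo} (where the result is invoked for the fixed point free involution $\tau$ --- note the label of Proposition \ref{tautwisted}) all show that the intended involution acts freely on $\Sigma$; the ``$\rho$'' in the statement is a notational slip carried over from the surrounding subsection. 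In the free setting your obstacle evaporates: $D_B=\emptyset$, both $S_\rho\to\Sigma_\rho$ and $S/(\rho\sigma)\to\Sigma_\rho$ are branched over the same divisor (the zeros of $Q_\rho$), $P_2$ is an honest flat $\Z_2$-bundle, and your character identity immediately yields the paper's statement in its precise sense, because two covers with equal branch divisor differ by exactly the flat bundle twisting their monodromies. So, transported to the correct setting, your argument is complete and correct; as written, it proves a necessarily weakened variant of a misread statement, and your analysis in fact demonstrates that the proposition cannot hold verbatim for the four-fixed-point involution.
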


\begin{proof} 
In order to prove the proposition, consider the concrete description of 2-fold covers in Section \ref{2cov}.
The surface $\Sigma$ is given by pairs
$(q,s_q)$ where $q\in \Sigma_\rho$ and $s_q$ is given by parallel transport along some curve (with fixed start point) and end point $q$
with respect to the unitary flat connection corresponding to the $\Z_2$-bundle $P_2\to\Sigma_\rho$. From that perspective
the spectral curve $S$ is given by triples
$(q, s_q,\omega_q)$
where $(q,s_q)$ are as above and $\omega_q\in (K_{\Sigma_\rho})_q$ satisfies $-\omega_q^2= (Q_\rho)_q.$
This identification holds as the pull-back of $K_{\Sigma_\rho}$ to $\Sigma$ is the canonical bundle $K_\Sigma.$ Then, the involutions $\sigma$ and $\rho$ act as
\begin{eqnarray}\sigma:  (q, s_q,\omega_q)&\mapsto& (q, s_q,-\omega_q),\\
 \rho:(q, s_q,\omega_q)&\mapsto& (q, -s_q,\omega_q).\end{eqnarray}
The spectral curve
$S_\rho$ is thus obtained  by identifying $(q,s_q,\omega_q)\sim (q,-s_q,\omega_q),$
and the curve
$S/(\rho\circ\sigma)$ by
identifying $(q,s_q,\omega_q)\sim (q,-s_q,-\omega_q).$ Taking the tensor product, i.e., $(q,s_q\otimes\omega_q)$,
which is well-defined on   $S/(\rho\circ\sigma),$ it follows that the branched cover
$ S/(\rho\circ\sigma)\to\Sigma_\rho$ is determined by   $Q_\rho\in H^0(\Sigma_\rho,K_{\Sigma_\rho}^2)$ and the holomorphic square root $P_2\otimes K_{\Sigma_\rho}$ of $K_{\Sigma_\rho}^2$ as required.
\end{proof}

The setting of Theorem \ref{theorem_3} (II) can be shown to be equivalent to the one of Theorem \ref{theorem_3} (I) for Riemann surfaces of genus $3$, reducing the study of the mid-dimensional equivariant $(B,B,B)$ branes to fixed point free actions on Riemann surfaces:

\begin{lemma}\label{0-4-8}
Let $\Sigma$ be a hyperelliptic Riemann surface of genus $3$ with hyperelliptic involution   $\psi$, equipped with an additional involution $\rho$ with $4$ fixed points. 
Then, $\tau=\rho\circ \psi$ is a fixed point free involution.
\end{lemma}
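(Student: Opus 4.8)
The plan is to realize $\tau=\rho\circ\psi$ as a nontrivial holomorphic involution whose topological Lefschetz number vanishes, and then to use that a nontrivial holomorphic involution of a compact Riemann surface has only isolated fixed points, each of local index $+1$, so that its number of fixed points equals its Lefschetz number.

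First I would dispose of the elementary preliminaries. Since $\psi$ is the hyperelliptic involution it is central in $\mathrm{Aut}(\Sigma)$, whence $\psi\circ\rho=\rho\circ\psi$; together with $\psi^2=\rho^2=\id$ this yields $\tau^2=\id$, so $\tau$ is an involution. Moreover $\tau$ is nontrivial, for $\tau=\id$ would force $\rho=\psi$, contradicting that $\psi$ fixes the $2g+2=8$ Weierstrass points while $\rho$ fixes only $4$ points.

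The core of the argument is a trace computation on $H^1(\Sigma,\C)\cong\C^{6}$. For a nontrivial holomorphic involution $f$ with $k$ fixed points, the Lefschetz fixed point theorem gives $k=L(f)=2-\operatorname{tr}\!\big(f^*\mid H^1\big)$, since $f$ acts trivially on $H^0$ and $H^2$ and each fixed point (where in suitable coordinates $f$ is $z\mapsto -z$) has local index $+1$. Applying this to $\psi$ with $k=8$ gives $\operatorname{tr}(\psi^*\mid H^1)=-6$, which on a $6$-dimensional space forces $\psi^*=-\id$ on $H^1$; applying it to $\rho$ with $k=4$ gives $\operatorname{tr}(\rho^*\mid H^1)=-2$. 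Because $\psi$ and $\rho$ commute, $\tau^*=\psi^*\circ\rho^*=-\rho^*$ on $H^1$, so $\operatorname{tr}(\tau^*\mid H^1)=2$ and hence $L(\tau)=2-2=0$. As $\tau$ is a nontrivial holomorphic involution, its number of fixed points equals $L(\tau)=0$, proving the claim.

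The analytic facts the argument rests on---centrality of the hyperelliptic involution and positivity of the local Lefschetz index of a holomorphic involution---are both standard, so I expect no real obstacle there. The step I regard as the crux is the vanishing $L(\tau)=0$ itself: a purely geometric treatment would show that any $\tau$-fixed point $p$ satisfies $\psi(p)=\rho(p)$ and would exclude points fixed by $\psi$ or by $\rho$ using that their fixed-point sets are disjoint (point stabilizers being cyclic, as in the proof of Theorem \ref{only1}), but it would still have to rule out fixed points lying over the two fixed points of the involution induced by $\rho$ on $\CP^1=\Sigma/\psi$; the Lefschetz computation is precisely what settles these cases at once.
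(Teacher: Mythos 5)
Your proof is correct, and it takes a genuinely different route from the paper's. The paper's proof is exactly the ``purely geometric treatment'' you sketch in your final paragraph: since $\tau$ and $\rho$ induce the same M\"obius involution on $\CP^1=\Sigma/\psi$, which has precisely two fixed points, any fixed point of $\tau$ must lie in the preimage of those two points under the degree-two map $\Sigma\to\CP^1$, a set of at most four points; by hypothesis the four fixed points of $\rho$ already lie over those two points, so this preimage coincides with $\mathrm{Fix}(\rho)$, and on $\mathrm{Fix}(\rho)$ the map $\tau=\psi\circ\rho$ acts as $\psi$, which moves every such point because $\mathrm{Fix}(\psi)$ and $\mathrm{Fix}(\rho)$ are disjoint (point stabilizers are cyclic). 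Thus the residual case you flagged as the obstacle to a geometric argument is settled in one line, without Lefschetz theory. What your computation buys instead: it produces the fixed-point count directly rather than by exclusion; it makes the commutation $\psi\circ\rho=\rho\circ\psi$ explicit via centrality of the hyperelliptic involution, a step the paper uses without comment (justified in its ambient context only because Theorem \ref{only1} assumes $\Gamma=\Z_2\times\Z_2$, whereas the lemma as stated assumes no commutativity); and it generalizes at once, since the same trace identity $\tau^*=-\rho^*$ on $H^1$ shows that for any involution $\rho\notin\{\id,\psi\}$ commuting with the hyperelliptic involution $\psi$ of a hyperelliptic surface of arbitrary genus one has $|\mathrm{Fix}(\rho)|+|\mathrm{Fix}(\rho\circ\psi)|=4$, of which the lemma is the case $|\mathrm{Fix}(\rho)|=4$. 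The paper's argument, by contrast, uses nothing beyond Riemann--Hurwitz and the covering $\Sigma\to\CP^1$, and is closer in spirit to the neighboring results (Proposition \ref{XXX} is proved by the same projection device).
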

\begin{proof}
Note that since $\rho$ and $\phi$ commute, the map $\rho$ is an involution. Moreover, $\rho$ gives rise to an involution on the quotient $\Sigma/\phi=\CP^1,$ which must
have exactly two fixed points by Riemann-Hurwitz. The possible fixed points of $\rho$ on $\Sigma$ must map to the fixed points of $\rho$ on $\CP^1.$ 
Hence, there are only 4 possible fixed points, but these are already fixed points of $\rho$ by assumption, and therefore they are interchanged by $\tau=\phi\circ\rho.$
\end{proof}

\subsection{Equivariant branes through a fixed point free involution $\tau$}
From the previous analysis, the hyperelliptic involution $\phi$ on a compact Riemann surface $\Sigma$ of genus $g=3$ together with a fixed point free involution $\tau$ also induce an involution $\rho=\phi\circ\tau$ which has $4$ fixed points. Moreover, one can see that all genus $3$ Riemann surfaces with fixed point free actions must be hyperelliptic: 

\begin{proposition}\label{XXX}
Let $\Sigma$ be a Riemann surface of genus $3$ with a fixed point free involution $\tau$. Then, $\Sigma$ is hyperelliptic.
\end{proposition}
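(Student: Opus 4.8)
The plan is to argue by contradiction, exploiting the dichotomy for genus $3$ curves already recorded above: a genus $3$ Riemann surface is either hyperelliptic or a non-singular plane quartic. So I would suppose that $\Sigma$ is \emph{not} hyperelliptic and show that the fixed-point-free involution $\tau$ must nonetheless fix a point, which is the desired contradiction. The natural tool is the canonical map. Since $\Sigma$ is non-hyperelliptic of genus $3$, the linear system $|K_\Sigma|$ embeds $\Sigma$ as a smooth quartic curve $C\subset\CP^2=\mathbb P(H^0(\Sigma,K_\Sigma)^*)$, and this embedding is functorial, hence $\tau$-equivariant.

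The key point is to understand the action of $\tau$ on the ambient $\CP^2$. Pullback of holomorphic forms gives a linear map $\tau^*$ on the three-dimensional space $H^0(\Sigma,K_\Sigma)$ with $(\tau^*)^2=\id$, so $\tau^*$ is diagonalizable with eigenvalues $\pm1$. It is neither the identity nor a scalar: were $\tau^*=\pm\id$, the induced projective map would be trivial and $\tau$ would act as the identity on $C\cong\Sigma$, contradicting that $\tau$ is a nontrivial involution. Hence the $\pm1$-eigenspaces have dimensions $2$ and $1$, and $\tau^*$ descends to a nontrivial involution $\bar\tau\in\PSL(3,\C)$ preserving $C$ whose restriction to $C$ is exactly $\tau$. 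Consequently a point of $C$ is fixed by $\tau$ if and only if it lies in the fixed locus of $\bar\tau$ in $\CP^2$. This fixed locus is the projectivization of the two eigenspaces, namely a line $\ell$ (from the $2$-dimensional eigenspace) together with one isolated point. Now I would invoke B\'ezout: the line $\ell$ meets the quartic $C$ in $4$ points counted with multiplicity, so $\ell\cap C$ is nonempty. Any such point lies on $C$ and is fixed by $\bar\tau$, hence is a fixed point of $\tau$ on $\Sigma$ — contradicting that $\tau$ is fixed-point free. Therefore $\Sigma$ cannot be a smooth plane quartic, and the dichotomy forces $\Sigma$ to be hyperelliptic.

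The only genuinely non-formal inputs, and where I expect the small amount of care to be needed, are twofold: first, that $\tau^*$ induces a nontrivial projective involution with a fixed \emph{line} rather than merely an isolated fixed point, which I settle by the eigenvalue multiplicity count $(2,1)$ on $H^0(\Sigma,K_\Sigma)\cong\C^3$; and second, the observation that a line necessarily meets the quartic, which is immediate from B\'ezout's theorem in $\CP^2$. Everything else — the equivariance of the canonical embedding and the identification of $\mathrm{Fix}(\tau|_C)$ with $C\cap\mathrm{Fix}(\bar\tau)$ — is routine bookkeeping.
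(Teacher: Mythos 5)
Your proof is correct, but it is a genuinely different argument from the one in the paper. The paper proves this proposition by the same covering-space technique as its Lemma on the composition $\rho\circ\psi$ (the proof is literally stated to be ``analogous to the proof of Lemma \ref{0-4-8}''): one passes to the quotient $\Sigma_\tau=\Sigma/\tau$, which has genus $2$ by Riemann--Hurwitz and is therefore automatically hyperelliptic, lifts its hyperelliptic involution through the unramified double cover $\Sigma\to\Sigma_\tau$ (the lifts commute with $\tau$), and uses Riemann--Hurwitz fixed-point counting to conclude that one of the two lifts is an involution with $8$ fixed points, i.e.\ the hyperelliptic involution of $\Sigma$. You instead argue by contradiction through the canonical embedding: a non-hyperelliptic genus-$3$ curve is a smooth plane quartic, $\tau^*$ acts on $H^0(\Sigma,K_\Sigma)\cong\C^3$ as a non-scalar involution, so the induced projective involution of $\CP^2$ fixes a line pointwise, and B\'ezout forces that line to meet the quartic, producing a fixed point of $\tau$. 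Both arguments are sound; the eigenvalue count $(2,1)$, the equivariance of the canonical map, and the nontriviality of $\bar\tau$ are all handled correctly. What the approaches buy is different: yours is a clean, self-contained nonexistence statement (no fixed-point-free involution on a plane quartic) relying only on the dichotomy the paper already records, whereas the paper's construction is what the rest of Section \ref{genus_3} actually feeds on --- it exhibits the hyperelliptic involution $\psi$ explicitly as a lift commuting with $\tau$, so that $\rho=\psi\circ\tau$ is seen to have $4$ fixed points, recovering exactly the configuration of Theorem \ref{only1}~(II); your contradiction argument establishes existence of $\psi$ but not, by itself, its interplay with $\tau$ (although that can be recovered afterwards from uniqueness and centrality of the hyperelliptic involution).
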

The proof is analogous to the proof of Lemma \ref{0-4-8}, 
%\begin{proof}
%Since the quotient Riemann surface $\Sigma/\tau=\Sigma_\tau$ has genus $2$, it admits a hyperelliptic involution which we shall denote by $\hat\varphi.$ Using the description of double covers as spectral covers associated to $SL(2,\C)$-Higgs bundles, one can see that $\hat \varphi$ lifts to an involution   $\varphi$ with fixed points on $\Sigma.$ Moreover, $\varphi$ and $\tau$ commute on $\Sigma$ and thus by Riemann-Hurwitz, either $\varphi$ has 8 fixed points making it  the hyperelliptic involution, or $\varphi$ has 4 fixed points. In the later case, $\varphi\circ\tau$ is also an
%involution, and   as in the proof of Lemma \ref{0-4-8}, one obtains that $\varphi\circ\tau$ must have $12-4=8$ fixed points, and hence also in this case $\Sigma$ has a hyperelliptic involution. 
%\end{proof}
and thus a hyperelliptic Riemann surface of genus $3$ with an additional involution with 4 fixed points is the same as Riemann surface of genus $3$ with a fixed point free involution. Hence,   the equivariant points in the Hitchin base can be described as follows:
\begin{proposition}
Let $\Sigma$ be a hyperelliptic Riemann surface of genus $3$ with an additional involution $\rho$ with 4 fixed points. Then, a holomorphic quadratic differential is invariant under the hyperelliptic involution $\phi$ and invariant under  $\rho$ if and only if
it is invariant under $\tau=\rho\circ\phi.$
\end{proposition}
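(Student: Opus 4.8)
The plan is to reduce the stated equivalence to a single sign computation. The forward implication is immediate: if $\phi^*Q=Q$ and $\rho^*Q=Q$, then $\tau^*Q=(\rho\circ\phi)^*Q=\phi^*\rho^*Q=\phi^*Q=Q$. For the reverse implication, the key structural fact I would invoke is that the hyperelliptic involution $\phi$ is central in $\mathrm{Aut}(\Sigma)$ (it is the unique involution with $\Sigma/\phi=\CP^1$, hence preserved by conjugation), so $\rho$ and $\tau$ commute with $\phi$. Consequently $\tau^*$ and $\rho^*$ preserve the eigenspace decomposition $H^0(\Sigma,K_\Sigma^2)=H^0_+\oplus H^0_-$ into the $(+1)$- and $(-1)$-eigenspaces of $\phi^*$. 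Since $\rho=\tau\circ\phi$ (as $\phi$ is central), it then suffices to prove that $\tau^*Q=Q$ forces the $\phi$-anti-invariant component $Q_-$ of $Q$ to vanish: indeed, once $Q=Q_+$ is $\phi$-invariant, one gets $\rho^*Q=\phi^*\tau^*Q=\phi^*Q=Q$ automatically.

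The heart of the argument is therefore to show that $\tau^*$ acts on $H^0_-$ by $-1$. First I would record the classical fact that for a hyperelliptic curve of genus $3$ the anti-invariant space $H^0_-$ is one-dimensional, spanned by a quadratic differential $\eta$ whose divisor is exactly the Weierstrass divisor $W=\mathrm{Fix}(\phi)$ (the eight Weierstrass points). This can be read off either from the pushforward decomposition of $\pi_*K_\Sigma^2$ along $\pi\colon\Sigma\to\Sigma/\phi=\CP^1$, or from the explicit model $y^2=f(x)$ in which $\eta=(dx)^2/y$ and $\div(\eta)$ is the ramification divisor. Because $\tau^2=\id$ and $\tau^*$ preserves the line $H^0_-$, it acts there by $\pm1$, and only this sign remains to be pinned down. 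Since $\tau^*=\phi^*\rho^*$ with $\phi^*=-1$ on $H^0_-$, the sign for $\tau$ is the negative of the sign for $\rho$, so I would instead determine the action of $\rho^*$ on $\eta$.

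To compute the sign for $\rho$ I would use one of its fixed points $p$. By Theorem~\ref{only1}(II) the involution $\rho$ has four fixed points, and since point-stabilizers in $\Sigma$ are cyclic these are disjoint from $\mathrm{Fix}(\phi)=W$; hence $\eta(p)\neq0$, which is exactly where the geometry enters. A non-trivial holomorphic involution fixing $p$ acts as $-1$ on $T_p\Sigma$, hence as $-1$ on $K_{\Sigma,p}$ and as $+1$ on the fibre $K_{\Sigma,p}^{2}$. Evaluating the relation $\rho^*\eta=c\,\eta$ at $p$ and using $\eta(p)\neq0$ then yields $c=+1$, so $\rho^*|_{H^0_-}=+1$ and therefore $\tau^*|_{H^0_-}=-1$. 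Feeding this back, $\tau^*Q=Q$ together with $\tau^*$ preserving the decomposition gives $\tau^*Q_-=Q_-$, while the computed sign gives $\tau^*Q_-=-Q_-$; hence $Q_-=0$, completing the reverse implication.

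The main obstacle is exactly this sign determination: the entire statement hinges on the fixed-point-free involution $\tau$ (rather than $\rho$) acting by $-1$ on the anti-invariant line, and the cleanest way to see it is the local linearisation of $\rho$ at one of its fixed points, which is available precisely because those fixed points avoid the Weierstrass locus $W$ where $\eta$ vanishes. Everything else — the reduction via centrality of $\phi$ and the one-dimensionality of $H^0_-$ for $g=3$ — is routine.
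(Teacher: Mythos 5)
Your proof is correct, but it takes a genuinely different route from the paper's. The paper argues by explicit computation: it normalizes $\Sigma$ to the algebraic model $y^2=\prod_{i=1}^{4}(z^2-z_i^2)$ with $\phi:(y,z)\mapsto(-y,z)$, $\rho:(y,z)\mapsto(y,-z)$, $\tau:(y,z)\mapsto(-y,-z)$, writes down the explicit basis $\{z^j(dz)^2/y^2,\ 0\le j\le 4\}\cup\{(dz)^2/y\}$ of $H^0(\Sigma,K_\Sigma^2)$, and reads off that the $\tau$-invariant subspace (spanned by $j=0,2,4$) coincides with the simultaneously $\phi$- and $\rho$-invariant one. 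You instead work coordinate-free: centrality of the hyperelliptic involution gives the $\phi^*$-eigenspace splitting $H^0(\Sigma,K_\Sigma^2)=H^0_+\oplus H^0_-$ preserved by $\rho^*$ and $\tau^*$, the whole statement reduces to the sign of $\tau^*$ on the one-dimensional line $H^0_-$, and you determine that sign by linearizing $\rho$ at one of its fixed points, using that those fixed points avoid the Weierstrass divisor (cyclic stabilizers) where the generator $\eta$ of $H^0_-$ has all its zeros. Both proofs are complete and all your ingredients check out (in the paper's model your $\eta$ is $(dz)^2/y$, with $\rho^*\eta=\eta$ and $\tau^*\eta=-\eta$, confirming your sign computation). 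The paper's computation is shorter once one accepts the ``without loss of generality'' normalization of $(\Sigma,\phi,\rho)$ to the symmetric model, which is exactly the step your argument renders unnecessary; in exchange, your proof exposes the mechanism — the equivalence holds precisely because $\rho$ acts by $+1$, hence $\tau$ by $-1$, on the anti-invariant line — and the same scheme transfers to any hyperelliptic curve carrying a commuting involution whose fixed points miss the Weierstrass locus, so it is the more portable of the two.
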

\begin{proof} Without loss of generality, suppose that $\Sigma$ is given by the algebraic equation
\[y^2=(z-z_1)(z+z_1)(z-z_2)(z+z_2)(z-z_3)(z+z_3)(z-z_4)(z+z_4),\]
for 8 pairwise disjoint points $\pm z_1,..,\pm z_4\in\C\setminus\{0\}\subset\CP^1$,   and that\linebreak
$\phi: (y,z)\mapsto(-y,z),$ 
$\rho(y,z)= (y,-z)$ and  $\tau(y,z)= (-y,-z)$
A basis for %the 6-dimensional space  
$H^0(\Sigma, K_\Sigma^2)$ is then
$\{\frac{1}{y^2}(dz)^2, ~\frac{z}{y^2}(dz)^2, ~\frac{z^2}{y^2}(dz)^2,$ ~$ \frac{z^3}{y^2}(dz)^2,   $ ~$  \frac{z^4}{y^2}(dz)^2,  ~\frac{1}{y}(dz)^2\}.$
%$\left\{\frac{1}{y^2}(dz)^2, ~\frac{z}{y^2}(dz)^2, ~\frac{z^2}{y^2}(dz)^2, ~\frac{z^3}{y^2}(dz)^2,  ~\frac{z^4}{y^2}(dz)^2,  ~\frac{1}{y}(dz)^2\right\}.$
Therefore the space of $\tau$-invariant holomorphic quadratic differentials is span-ned by
%$\left\{ \frac{1}{y^2}(dz)^2,\right.$ $\left.~\frac{z^2}{y^2}(dz)^2,~\frac{z^4}{y^2}(dz)^2\right\},$
$\{ \frac{1}{y^2}(dz)^2,\frac{z^2}{y^2}(dz)^2,~\frac{z^4}{y^2}(dz)^2\},$
and   this is exactly the space of $\phi$ and $\rho$ invariant holomorphic quadratic differentials.
\end{proof}
 
% 
%Although some of the constructions above do not carry through to higher rank groups, there are many directions in which one may generalize the methods described here to obtain branes in the moduli spaces of $G_{\C}$-Higgs bundles for higher rank groups $G_{\C}$. In what remains of this Section we shall describe some of these new families of branes.
%

From the above result, if the gauge class of a flat connection is equivariant with respect to the hyperelliptic involution $\psi$ and with 
respect to $\rho$, it is   also equivariant with respect to $\tau.$ The converse is true as well for $\tau$-invariant connections (compare with Remark \ref{invariantsc}):
\begin{proposition}
Let $\nabla$ be a flat, irreducible, $\tau$-invariant $\SL(2,\C)$-con-nection on a Riemann surface $\Sigma$ of genus $3$, where $\tau$ is a fixed points free involution. Then, $\nabla$ is equivariant with respect to the hyperelliptic involution.
\end{proposition}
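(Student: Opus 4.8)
The plan is to pass to Higgs bundles and show that the hyperelliptic involution $\psi$ acts as the identity on the $\tau$-invariant intersection $\CPP$ with a generic Hitchin fibre, so that every $\tau$-invariant class is automatically $\psi$-equivariant. By the nonabelian Hodge correspondence used in the proof of Theorem \ref{hyperkaehlersub}, the flat irreducible $\tau$-invariant connection $\nabla$ corresponds to a $\tau$-invariant Higgs bundle $(E,\Phi)$; for $\nabla$ with generic spectral data this bundle has determinant $Q=\det\Phi$ with simple zeros, $Q$ is $\tau$-invariant, and by Theorem \ref{Aabelian-variety} the class is a point $L\in\CPP=\{L\in{\rm Jac}(S)\mid \sigma^*L=L^*,\ \tau^*L=L\}$. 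Since $\psi$-equivariance ($\psi^*\nabla\cong\nabla$) is a closed condition and those $\nabla$ with simple-zero spectral data are dense in the $\tau$-invariant component, it suffices to treat this generic case and then pass to the closure.

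First I would record that $\psi$ preserves $\CPP$. As $\tau=\rho\circ\psi$ generates $\Z_2\times\Z_2$ together with $\psi$, the involutions $\psi,\rho,\tau$ and the sheet-swap $\sigma$ pairwise commute on $S$; hence for $L\in\CPP$ one computes $\sigma^*(\psi^*L)=\psi^*(\sigma^*L)=\psi^*(L^*)=(\psi^*L)^*$ and $\tau^*(\psi^*L)=\psi^*(\tau^*L)=\psi^*L$, so $\psi^*L\in\CPP$. Thus $L\mapsto\psi^*L$ is a holomorphic involution of the abelian variety $\CPP$, whose fixed locus consists of the classes satisfying in addition $\psi^*L=L$ and therefore $\rho^*L=\tau^*\psi^*L=L$; these are exactly the fully $\Z_2\times\Z_2$-invariant Higgs bundles, i.e. the $\Gamma$-equivariant classes of Theorem \ref{only1} (II).

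The heart of the argument is then a dimension count. Using $g(S)=9$, $g(\Sigma)=3$ and the unbranched cover $S\to S_\tau$ with $g(S_\tau)=5$, $g(\Sigma_\tau)=2$, one gets $\dim_\C\CPP=\dim_\C{\rm Prym}(S_\tau,\Sigma_\tau)=g(S_\tau)-g(\Sigma_\tau)=3$. On the other hand, by the previous proposition the $\tau$-invariant and the $(\psi,\rho)$-invariant quadratic differentials coincide, so the $\Gamma$-equivariant brane fibres over the same $3$-dimensional base as the $\tau$-invariant one; since both branes are mid-dimensional, and hence of equal total dimension, the generic $\Gamma$-equivariant fibre (the fixed locus above) also has complex dimension $3$. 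Finally I would invoke that the fixed locus of an order-two automorphism of a connected abelian variety is, when nonempty, a translate of the $(+1)$-eigen-subtorus; being here nonempty (such $\Gamma$-equivariant connections exist by Remark \ref{orbi}) and of full dimension $3$, it must be all of $\CPP$. Hence $\psi^*L=L$ for every $L\in\CPP$, so each $\tau$-invariant $(E,\Phi)$ is $\psi$-invariant and $\nabla$ is $\psi$-equivariant. The main obstacle is the bookkeeping guaranteeing that the $\Gamma$-equivariant fibre really attains the full dimension $3$, equivalently that the two mid-dimensional branes share both their base and their total dimension, together with the $\SL(2,\C)$-versus-$\PSL(2,\C)$ component matching implicit in Remark \ref{invariantsc}.
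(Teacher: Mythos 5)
Your approach is entirely different from the paper's, and it has a genuine gap --- precisely the step you set aside as ``bookkeeping.'' That step is not bookkeeping: it is the mathematical core, and in the paper's logical order it is a \emph{consequence} of this proposition rather than an available input. Your dimension count needs three facts about the case (II) brane of $\Gamma$-equivariant classes: that it is non-empty, that the relevant component has complex dimension $6$, and that it dominates the $3$-dimensional invariant base, so that its generic fibre --- which you identify with the fixed locus of $\psi^*$ on $\CPP$ --- has dimension $3$. None of these is established at this point of the paper. Theorem \ref{only1} is an implication (``if a component is mid-dimensional, then (I) or (II)''), not an existence statement; Remark \ref{orbi} gives existence of some equivariant connections, but not of $\Gamma$-equivariant classes lying over a prescribed generic invariant differential $Q$, nor in the strictly equivariant component with the $\PSL(2,\C)$-monodromies of Remark \ref{Pg3}, which is the one of dimension $6$. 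Worse, the paper obtains the fibre structure of the $\Gamma$-equivariant brane only in the remark \emph{following} this proposition (stable strictly $\Gamma$-equivariant Higgs bundles are exactly stable $\tau$-invariant ones, whence the intersection with regular fibres is given by Theorem \ref{Aabelian-variety}), and it derives that remark \emph{from} this proposition; running the argument in your direction is circular unless you supply an independent proof of the fibre structure.

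The paper's own proof is three lines and needs none of this: a $\tau$-invariant irreducible $\nabla$ is the pull-back of an irreducible flat $\SL(2,\C)$-connection on the genus-$2$ quotient $\Sigma_\tau=\Sigma/\tau$; by Heu--Loray \cite[Theorem 2.1]{HL14} every irreducible flat $\SL(2,\C)$-connection on a genus-$2$ surface is equivariant with respect to the hyperelliptic involution; and since the hyperelliptic involution of $\Sigma$ covers that of $\Sigma_\tau$, the equivariance lifts back to $\Sigma$. Note that this hyperelliptic-descent input is exactly the kind of nontrivial statement your generic-fibre argument is implicitly trying to re-prove. The pieces of your proposal that do work --- that $\psi^*$ preserves $\CPP$, that the fixed-point set of an involution of a connected abelian variety is a finite union of translates of the $(+1)$-eigen-subtorus and is everything once non-empty and full-dimensional, and the reduction to generic spectral data by density and closedness of the equivariance condition --- are correct, but they all sit downstream of the missing step, so the proof as written does not close.
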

\begin{proof}
As $\nabla$ is invariant with respect to $\tau$ it is given by the pull-back of a flat $\SL(2,\C)$-connection on the genus $2$ surface $\Sigma_\tau=\Sigma/\tau.$ If $\nabla$ is irreducible, the corresponding connection on  $\Sigma_\tau$ is irreducible as well. Hence, by a result of 
\cite[Theorem 2.1]{HL14}, the connection is equivariant with respect to the hyperelliptic involution on the genus $2$ surface, and hence it is also equivariant with respect to the hyperelliptic involution on the genus 3 surface.
\end{proof}
Looking at the dimension of the various moduli spaces, it becomes clear that
$\tau$-invariant irreducible flat connections on $\Sigma$  are not invariant but only strictly equivariant with respect to
 the generators $\psi$ and $\rho$ of $\Gamma.$ 
\begin{remark}
Let $\Gamma$ be the group generated by $\psi$ and $\rho.$ The proposition above   states that (stable) strictly 
$\Gamma$-equivariant Higgs bundles are exactly (stable) $\tau$-invariant Higgs bundles. Hence, the intersection of the space of equivalence classes
of $\Gamma$-equivariant Higgs bundles with a regular fibre of the Hitchin map is given by Theorem \ref{Aabelian-variety}, see also Theorem \ref{ultimo} below.
\end{remark}
Note that irreducibility is not a necessary condition to be in the equivariant brane, as there exist flat reducible connections on $\Sigma$ which correspond to irreducible connections on the hyperelliptic genus 2 surface $\Sigma_\tau.$
Furthermore, since flat abelian connections on the quotient $\Sigma_\tau$ are equivariant with respect to the hyperelliptic involution, a non-trivial class in $H^1(\Sigma_\tau,\Z_2)$ is given by the choice of two points $(w_1,w_2)$ out of  the six Weierstrass points of
$\Sigma_\tau$.  Such a class is represented by a closed curve on $\CP^1\setminus\{w_1,..,w_6\}$ with even winding number around $w_3,..,w_6$ and odd winding number around $w_1$ and $w_2.$
 Without loss of generality,   suppose that $\Sigma_\tau$ is given by $y^2=(z-z_1)(z-z_2)...(z-z_6)$ and that the class in $H^1(\Sigma_\tau,\Z_2)$ labelling the double cover $\pi\colon \Sigma\to \Sigma_\tau$
 is  determined by $w_1=z_1$ and $w_2=z_2$, this is, is dual to the above homology class.  Then, the Riemann surface  $\Sigma$ has equation  $ u^2=(z_1-z_2)^2 (z_2-z_3)..(z_2-z_6)(w^2-\frac{z_1-z_3}{z_2-z_3})...(w^2-\frac{z_1-z_6}{z_2-z_6})$, the fixed point free involution $\tau$ is given by
$(u,w)\mapsto (-u,-w)$, 
and the covering map $\pi:\Sigma\rightarrow\Sigma_\tau$ is  given by
\begin{equation}\label{pi0infty}
\pi : (u,w)\mapsto (y,z)=\left(\frac{u w}{(w^2-1)^3},\frac{z_2 w^2-z_1}{w^2-1}\right).\end{equation}
  
In order to study the $SL(2,\C)$ Hitchin fibration on $\Sigma$, and the one induced on $\Sigma_\tau$, consider $Q$   a holomorphic quadratic differential on $\Sigma_\tau$ with simple zeros. 
After a Moebius transformation, and 
 up to constant scaling $Q=\frac{z(dz)^2}{y^2}$, and its pull-back   to $\Sigma$  is 
$\pi^*Q=4\frac{(z_1-z_2)^2(w^2-1)(z_2 w^2-z_1)(d w)^2}{u^2}.$
The quadratic differentials $Q$ and $\pi^*Q$ label $SL(2,\C)$-Higgs bundles, and thus define spectral curves  $S$ and $S_\tau$ which are double covers  of $\Sigma$ and $\Sigma_\tau$, respectively. Note that there is a natural
unbranched covering $S\to S/\tau= S_\tau.$
Thus, there is the following natural  commutative diagram, where as in previous sections, $\sigma$ is the natural involution switching the sheets of the 2-covers:
\begin{equation}\label{diagmMs3}
\xymatrix{
%&S_\phi\ar[dr]^{\tau}&\\
 S  \ar[rr]_{\mod\tau}\ar[d]^{\mod\sigma}&& S_\tau=S/\tau \ar[d]^{\mod\sigma}  \ar[rr]_{\mod\phi}&&\tilde\Sigma:=S_\tau/\phi\ar[d]^{\mod\sigma}\\
  \Sigma=S/\sigma \ar[rr]_{\mod\tau}&&\Sigma_\tau=\Sigma/\tau\ar[rr]_{\mod\phi}&&\CP^1
}
\end{equation}

Note that $\tilde\Sigma\to\CP^1$ branches over the points $0,\infty,z_1,..,z_6\in\CP^1.$
Denote the corresponding Weierstrass points of $\tilde\Sigma$ by the same symbols.
Moreover, one can show that the unbranched cover $S_\tau\to \tilde\Sigma$ corresponds to the pair of Weierstrass points $0,\infty\in\tilde\Sigma,$ 
and that the unbranched cover $S_\phi\to \tilde\Sigma$ corresponds to the pair of Weierstrass points $z_1,z_2\in\tilde\Sigma$, for $S_\phi:=S/\phi$.
This proves the first part of the following theorem:

\begin{theorem}\label{ultimo} The $(B,B,B)$-brane of $\Gamma$-equivariant $SL(2,\C)$-Higgs bundles intersects the generic fibres of the Hitchin fibration in an  abelian variety \begin{eqnarray}\mathcal{P}=Jac(\tilde\Sigma)/\Z_2\times \Z_2,\end{eqnarray} where $\tilde\Sigma$ is the hyperelliptic Riemann surface  of genus $3$ branched over $0,$ $\infty,  z_1,$ $\ldots ,z_6$
and $\Z_2\times\Z_2$ is isomorphic to the group generated by the $\Z_2$-bundles $L(0-\infty)\in{\rm Jac}(\tilde\Sigma) $ and $L(z_1-z_2)\in{\rm Jac}(\tilde\Sigma).$ \\
Analogously, one has that 
  \begin{eqnarray}\label{Pvee}\mathcal{P}^{\vee}= {\rm Jac}(E)\times {\rm Jac}(M)/\Z_2\times \Z_2,\end{eqnarray}  where
$E$ is the elliptic curve which branches over $0,\infty,z_1,z_2\in\CP^1$,
$M$ is the hyper-elliptic curve of genus 2 branched over $0,\infty, z_3,z_4,z_5,z_6\in\CP^1,$
and
$\Z_2\times\Z_2$ is isomorphic to the group generated by the $\Z_2$-bundles $L(z_1-z_2)\in {\rm Jac}(E)$ and  $L(0-\infty)\in{\rm Jac}(M).$

\end{theorem}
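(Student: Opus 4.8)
The plan is to exploit the fact that $\sigma$, $\phi$ and $\tau$ are three commuting involutions of the genus-$9$ spectral curve $S$, generating a group $\Z_2^3$ with $S/\Z_2^3=\CP^1$, and to realise both $\mathcal P$ and $\mathcal P^\vee$ as images of pullback maps from the Jacobians of intermediate double covers of $\CP^1$, computing the kernels exactly as in the proof of Theorem \ref{Aabelian-variety}. The first step is to record which of the seven involutions in $\Z_2^3$ act freely: from the local monodromies of $S\to\CP^1$ one checks that $\phi,\tau,\rho=\phi\tau$ and $\sigma\tau$ are fixed point free, while $\sigma,\sigma\phi$ and $\sigma\phi\tau$ have fixed points. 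This free/non-free dichotomy governs all the ramification bookkeeping that follows.

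For the first assertion, the Remark preceding the theorem identifies the $\Gamma$-equivariant brane with the $\tau$-invariant one, so by Theorem \ref{Aabelian-variety} we have $\mathcal P=\{L\in{\rm Jac}(S)\mid\sigma^*L=L^*,\ \tau^*L=L\}$. Because $\langle\phi,\tau\rangle$ acts freely, $S\to\tilde\Sigma=S/\langle\phi,\tau\rangle$ is an unramified $\Z_2^2$-cover, and the induced $\sigma$ is the hyperelliptic involution of $\tilde\Sigma$; hence the pullback ${\rm Jac}(\tilde\Sigma)\to{\rm Jac}(S)$ maps into $\{\sigma^*L=L^*\}$ and, by a dimension count, has image exactly $\mathcal P$. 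Its kernel is the group of three $2$-torsion bundles defining the intermediate unramified covers $S_\tau,S_\phi,S_\rho\to\tilde\Sigma$, i.e.\ $\langle L(0-\infty),L(z_1-z_2)\rangle\cong\Z_2\times\Z_2$, giving $\mathcal P={\rm Jac}(\tilde\Sigma)/\Z_2\times\Z_2$.

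For $\mathcal P^\vee$ I would argue in parallel, starting from \eqref{non-sym-prym}. The conditions $\sigma^*L=L^*$ and $\tau^*L=L^*$ force $(\sigma\tau)^*L=L$, so every class in $\mathcal P^\vee$ is $\sigma\tau$-invariant, and decomposing under the residual involution $\phi$ exhibits $\mathcal P^\vee$ as the sum of the images of the pullback maps from $M:=S/\langle\phi,\sigma\tau\rangle$ and $E:=S/\langle\sigma\phi,\sigma\tau\rangle$; as before, $\sigma$ being the hyperelliptic involution of each, both images lie in $\mathcal P^\vee$ and together fill it by dimension. The crux is to compute the branch loci of $M$ and $E$ over $\CP^1$. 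Determining the eight inertia generators of $S\to\CP^1$ as $m_0=m_\infty=\sigma$, $m_{z_1}=m_{z_2}=\sigma\phi\tau$ and $m_{z_3}=\dots=m_{z_6}=\sigma\phi$---which follows from the known branch data of $\tilde\Sigma\to\CP^1$ and $\Sigma_\tau\to\CP^1$ together with the observation that the two points carrying $\tau$-monodromy are the images of the four $\rho$-fixed points, hence (after relabelling) $z_1,z_2$---one reads off that $M$ branches over $\{0,\infty,z_3,z_4,z_5,z_6\}$, so $g(M)=2$, and $E$ over $\{0,\infty,z_1,z_2\}$, so $g(E)=1$.

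It then remains to assemble $\mathcal P^\vee$ as the image of the product pullback ${\rm Jac}(E)\times{\rm Jac}(M)\to{\rm Jac}(S)$ and to compute its kernel by the same trivial-monodromy criterion used in Theorem \ref{Aabelian-variety}: a nontrivial $2$-torsion bundle lies in the kernel precisely when the double cover it defines is the unique \emph{unramified} one among the three double covers of $E$, respectively $M$, sitting inside $S$. Since $\sigma\phi$ has fixed points while $\sigma\tau$ and $\rho$ are free, the only unramified double cover of $E$ inside $S$ is $S/\langle\sigma\phi\rangle\to E$ and that of $M$ is $S/\langle\sigma\phi\tau\rangle\to M$; a short check of branch behaviour identifies the corresponding bundles with $L(z_1-z_2)\in{\rm Jac}(E)$ and $L(0-\infty)\in{\rm Jac}(M)$, and the degree of the isogeny forces the full kernel to be their product $\Z_2\times\Z_2$. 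This yields $\mathcal P^\vee=({\rm Jac}(E)\times{\rm Jac}(M))/\Z_2\times\Z_2$. The main obstacle throughout is the correct determination of the inertia generators $m_b$, and in particular the identification of the two $\tau$-monodromy points with the pair $z_1,z_2$ arising from the $\rho$-fixed points; once these are in hand the branch loci, the unramified subcovers, and the kernel generators all follow.
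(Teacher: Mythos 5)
Your architecture is essentially the paper's: both arguments realize $\mathcal{P}$ and $\mathcal{P}^{\vee}$ as isogenous quotients of Jacobians of quotient curves via pullback, and compute kernels by covering-space/monodromy arguments. Your systematic $\Z_2^3$-bookkeeping (free versus non-free involutions on $S$, inertia generators $m_0=m_\infty=\sigma$, $m_{z_1}=m_{z_2}=\sigma\phi\tau$, $m_{z_3}=\cdots=m_{z_6}=\sigma\phi$ over $\CP^1$) is a cleaner packaging of what the paper extracts from Theorem \ref{Aabelian-variety}, Propositions \ref{Babelian-variety} and \ref{tautwisted}, and the explicit hyperelliptic equations; and all of your intermediate claims check out: the free/non-free dichotomy, the branch loci and genera of $\tilde\Sigma$, $E$ and $M$, the first assertion about $\mathcal{P}$, and the two individual kernels $\ker({\rm Jac}(E)\to{\rm Jac}(S))=\langle L(z_1-z_2)\rangle$ and $\ker({\rm Jac}(M)\to{\rm Jac}(S))=\langle L(0-\infty)\rangle$.

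The genuine gap is the final step for $\mathcal{P}^{\vee}$: the phrase ``the degree of the isogeny forces the full kernel to be their product'' is not an argument, because you have no independent computation of that degree — the degree \emph{is} the kernel you are trying to determine, and the precise kernel is exactly the content of \eqref{Pvee}. For a sum map ${\rm Jac}(E)\times{\rm Jac}(M)\to{\rm Jac}(S)$, $(l_1,l_2)\mapsto \pi_E^*l_1\otimes\pi_M^*l_2$, the kernel always contains the product of the two individual kernels but may a priori be strictly larger: it also contains every ``anti-diagonal'' pair with $\pi_E^*l_1=(\pi_M^*l_2)^*\neq\CO$. Ruling such pairs out is precisely where the paper's proof does its substantive work: pullbacks from ${\rm Jac}(E)$ are $\phi$-anti-invariant, pullbacks from ${\rm Jac}(M)$ are $\phi$-invariant, hence any common element is $2$-torsion, and the unique nontrivial relation is then identified as $(L(0-\infty)_E,L(0-\infty)_M)$ — note that on $\tilde S$ this \emph{is} a genuine anti-diagonal element, both factors pulling back to the same nontrivial bundle $L_0$, so the phenomenon you dismiss actually occurs. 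In your formulation over ${\rm Jac}(S)$ this relation happens to land inside the product of the two kernels only because of the coincidence, special to the elliptic curve $E$ branched at four points, that $L(0-\infty)=L(z_1-z_2)$ in ${\rm Jac}(E)$; your sketch never observes this identity, yet it is needed even to reconcile the kernel with the generators named in the statement. To close the argument you must either run the paper's invariant/anti-invariant analysis on $\tilde S$ together with the identification of $L_0$ as the pullback of $L(z_1-z_2)_E$, or push your character computation for the $\Z_2^3$-cover one step further to show directly that no pair with both pullbacks nontrivial lies in the kernel.
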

\begin{proof}

 In order to describe the points in the regular Hitchin fibres corresponding to $\tau$-anti-equivariant Higgs fields, recall from Porposition \ref{Babelian-variety} that these are given by the Prym variety of 
 $\tilde S=S/(\tau\circ\sigma)\to\Sigma_\tau$ modulo the $\Z_2$-line bundle $L_0\to\tilde S$  
 corresponding to the covering $S\to\tilde S.$  
In particular, this $\Z_2$-bundle is the unique non-trivial line bundle on $\tilde S$ which pulls back to the trivial bundle on $S$. In what follows we will show that $L_0$ is given by $L_0=L(z_1^+ +z_1^--z_2^+-z_2^-)\to \tilde S,$ where
$z_1^\pm$ and $z_2^\pm$ are the points in $\tilde S$ lying over $z_1,z_2\in\CP^1,$ respectively, in the commutative diagramm \eqref{diagRMs}. Indeed, this can be deduced from the fact   that   the maps $\tilde S\to \Sigma_\tau$ and  $S_\tau\to  \Sigma_\tau$ differ by the holomorphic $\Z_2$-bundle $L(z_1-z_2)$ on $\Sigma_\tau$, 
where  $L(z_1-z_2)\in Jac(\Sigma_\tau)$ is a holomorphic line bundle whose pull-back to $S$ is trivial.

Consider the Riemann surface of genus $2$ defined as
\[M:=S_\phi/(\tau\circ\sigma).\]  The natural action of $\sigma$ on the Riemann surface  $M$ has 6 fixed points lying over $0,\infty,z_3,z_4,z_5,z_6\in\CP^1$. Moreover, one can see that the induced map 
 $S_\phi\to S_\phi/(\tau\circ\sigma)=M$ branches over the 4 points lying over $z_1,z_2\in\CP^1$, leading to the following diagram:
\begin{equation}\label{diagRMs}
\xymatrix{
 & & & S \ar[llldd]^{\mod\sigma}\ar[dd]^{\mod\tau\circ\sigma}\ar[rrrdd]^{\mod\phi} & & & \\
& & & & & \\
      \Sigma\ar[rdd]^{\mod\tau}  & & & \tilde S\ar[lldd]^{\mod\sigma}\ar[rrdd]^{\mod\phi}\ar[dd]^{\mod\phi\circ\sigma}  & & & S_{\phi}\ar[ldd]^{\mod\tau\circ\sigma} & \\
 & & & & &\\
 &  \Sigma_\tau \ar[rrdd]^{\mod\phi} &  & E \ar[dd]^{\mod\sigma}& & M \ar[lldd]^{\mod\sigma} &\\
 & & & & &\\
 & &&  \CP^1   & &\\
}
\end{equation}

 From the above descriptions, it can be seen that the  Riemann surface $E:= \tilde S/(\phi\circ \sigma)$ is of genus 1. Furthermore, the induced action of $\sigma$ on $E$ gives a map $E\to\CP^1$ branched over $0,\infty,z_1,z_2\in\CP^1.$ Thus,
one obtains a 
 natural map (via pull-back composed with tensor product) given by  \begin{equation}\label{JacEMP}Jac(E)\times Jac(M)\to Prym(\tilde S,\Sigma_\tau).\end{equation}
 We will show next that   this map is surjective and its kernel is finite. 
 In fact, the first assertion follows from the second. 
 In order to compute the kernel of \eqref{JacEMP}
consider holomorphic line bundles $l_1\to E$ and $l_2\to M$ which have the property that the tensor product of their pull-backs to $\tilde S$  is trivial.

Note that all line bundles over $\tilde S$ which are contained in the pull-back of $Jac(E)$ are $\phi$-anti-invariant while those contained in the pull-back of $Jac(M)$ are $\phi$-invariant.
Hence, the intersection of the pull-backs of the two Jacobians is $0$-dimensional and consists 
of $\Z_2$-bundles $L_1$ on $\tilde S$.  The  line bundles  $l_1\to E$ and $l_2\to M$ 
which pull-back to $L_1\to \tilde S$
are also  $\Z_2$-bundles on $E$ and $M.$ Therefore it follows that either
$l_1=L(0-\infty)\to E$ and $l_2=L(0-\infty)\to M$ or both are trivial. 
 Noting 
that the $\Z_2$-bundle $L_0\to\tilde S$ is also given as the pull back of  $L(z_1-z_2)\to E,$ 
 the proof of \eqref{Pvee} (and of the theorem) follows.
\end{proof}

 \begin{remark}
 For $\Gamma$  the group generated by $\psi$ and $\rho$, note that $\Gamma$-anti-equivariant Higgs fields have not been defined. To see why this was not done, note that the space  $ \{L\in Prym(S,\Sigma)\mid \psi^*L=L^*,\rho^*L=L^* \}$ is 0-dimensional,   the space  $\{L\in Prym(S,\Sigma)\mid \psi^*L=L^*,\rho^*L=L\}$ is 2 dimensional, 
  and the space   $ \{L\in Prym(S,\Sigma)\mid \phi^*L=L,\rho^*L=L^* \}$
 is 1 dimensional. Hence, the only reasonable way to get a half-dimensional space would be to 
 consider the space of Higgs bundles for which there exists a $g\in\Gamma$ with respect to which the Higgs field is anti-invariant, and this is    the space of $\tau$-anti-equivariant Higgs bundles.
  \end{remark}

\section{Some remarks on equivariant branes and Langlands duality} \label{duality}

Langlands duality can be seen in terms of Higgs bundles as a duality between the fibres of the Hitchin fibrations for $\mathcal{M}_{G_\C}$ and $\mathcal{M}_{^LG_{\C}}$, for $^LG_\C$ the Langlands dual group of $G_\C$ (as was first seen in \cite{LPS_Tamas}).  As explained   \cite[Section 12]{Kap} under the duality, specifically homological mirror symmetry, there should be an equivalence of categories of branes on $\mathcal{M}_{G_\C}(\Sigma)$ and $\mathcal{M}_{^L {G_\C}}(\Sigma)$ under which the brane types $(B,B,B) \leftrightarrow (B,A,A)$ are exchanged.

Examples of branes and their proposed duals in the moduli spaces of Higgs bundles for low rank groups were presented in \cite{Kap}, and further studied in \cite{sergei}. Moreover, in the case of the $(B,A,A)$-branes coming from real forms $G$ of the complex lie group $G_\C$ there is a conjecture of what the (support of) the dual branes should look like \cite{slices} (see \cite{Ga} and \cite{classes} for support). It is thus natural to ask equivalent questions in the setting of the present research, about what the duality between branes should be in the case of the spaces constructed in this paper. 
 In the case of  $U(m,m)$-Higgs bundles, the duality was studied in \cite{classes} where through the spectral data description of \cite{umm} in terms of anti-invariant line bundles, the  proposed dual branes were constructed in terms of invariant ones, which agreed with the conjecture in \cite{slices}. 
 
 In \cite{classes} Hitchin proposed a hyper-holomorphic sheaf which together with the hyper-K\"ahler subspace of the moduli space of Higgs bundles would give the $(B,B,B)$-brane. It is interesting to note that given the similarities of the construction of the subspaces of Higgs bundles in terms of equivariant objects, the hyperholomorphic sheaf constructed in \cite{classes} for $U(m,m)$-Higgs bundles should give naturally a hyperholomorphic sheaf for the equivariant $(B,B,B)$-branes of the present paper.  
 
 Since from the work of Section \ref{equiv_data2} and   the previous propositions, the branes obtained are subspaces of abelian varieties too, following the lines of thought of the real case one may think that the
mirror of the equivariant $(B,B,B)$-brane is given by moduli space of Higgs bundles on $\Sigma$ which are $\psi$-equivariant and anti-equivariant with respect to $\tau.$
Here, anti-equivariant means that the corresponding holomorphic structure $\bar\partial$ is equivariant, this is, $\tau^*\bar\partial=\bar\partial.g$ 
and $\tau^*\Phi=-g^{-1}\Phi  g$ for a suitable gauge tranformation $g$.
Note that the determinant of an anti-equivariant Higgs field is an invariant holomorphic quadratic differential $Q$. As before, after the choice of a square root of $Q$,
anti-equivariant Higgs fields with determinant $Q$ are parametrized by the abelian variety $\CPP^{\vee}$ which consists of points in the ${\rm Prym}(S,\Sigma)$ which are invariant under $\psi$ and anti-invariant under $\tau.$

%% The Appendices part is started with the command \appendix;
%% appendix sections are then done as normal sections
%% \appendix

%% \section{}
%% \label{}

%% If you have bibdatabase file and want bibtex to generate the
%% bibitems, please use
%%
%%  \bibliographystyle{elsarticle-num} 
%%  \bibliography{<your bibdatabase>}

%% else use the following coding to input the bibitems directly in the
%% TeX file.
 
\section*{Bibliography}

\end{document}